\def\thm@space@setup{%
	\thm@preskip=\parskip \thm@postskip=0pt
}
\theoremstyle{plain} \numberwithin{equation}{section}
\newtheorem{theorem}{Theorem}[section]
\newtheorem{corollary}[theorem]{Corollary}
\newtheorem*{conjecture*}{Conjecture}
\newtheorem{lemma}[theorem]{Lemma}
\newtheorem*{question*}{Question} \topmargin-2cm
\theoremstyle{definition}
\newtheorem{definition}[theorem]{Definition}
\newtheorem{remark}[theorem]{Remark}
\DeclareMathOperator{\Hessian}{Hess}
\DeclareMathOperator{\distance}{dist}
\DeclareMathOperator{\support}{supp}
\renewcommand{\Im}{\operatorname{Im}}
\renewcommand{\Re}{\operatorname{Re}}
\renewcommand{\i}{\operatorname{\sqrt{-1}}}
\begin{document}
	
	\title{The Diederich--Forn\ae ss index II: for domains of trivial index}

	\author{
		Bingyuan Liu\\ bingyuan@ucr.edu
	}
	
	\date{March 9, 2017}

	\maketitle
	
	\begin{abstract}
We study bounded pseudoconvex domains in complex Euclidean spaces. We find analytical necessary conditions and geometric sufficient conditions for a domain being of trivial Diederich--Forn\ae ss index  (i.e. the index equals to 1). We also connect a differential equation to the index. This reveals how a topological condition affects the solution of the associated differential equation and consequently obstructs the index being trivial. The proofs relies on a new method of study of the complex geometry of the boundary. The method was motivated by geometric analysis of Riemannian manifolds. We also generalize our main theorems under the context of de Rham cohomology.
	\end{abstract}
	
	\section{Introduction}\label{sec0}
	
	Pseudoconvexity is of central importance in modern complex analysis. For example, In 1952, Garabedian--Spencer \citep{GS52} suggested to study the $\bar{\partial}$-Neumann problem on smoothly bounded pseudoconvex domains $\Omega$. After that, the $L^2$ existence and the global regularity for the $\bar{\partial}$-Neumann operator for strongly pseudoconvex domains was solved by Kohn \citep{Ko63}, \citep{Ko64} and H\"{o}rmander \citep{Ho65} in the 1960s. However, the global regularity for (weakly) pseudoconvex domains still lacks a complete answer, even if some sufficient conditions have been found. The Diederich--Forn\ae ss index serves as a refinement of (weakly) pseudoconvex domains and is well-known as a key to the global regularity and many other problems of pseudoconvex domains.
	
	A connection between the Diederich--Forn\ae ss index and global regularity of the $\bar{\partial}$-Neumann operator\slash Bergman projection has been realized more and more precisely since the 1990s. For example, in 1992 Barrett  \citep{Ba92} showed that the Sobolev space $W^k(\overline{\Omega}_\beta)$ is not preserved by the $\bar{\partial}$-Neumann operator\slash Bergman projection when $s\geq\frac{\pi}{\beta-2\pi}$. Here, $\Omega_\beta$ ($\beta>\pi/2$) denotes a $\beta$-worm domain and is known to have non-trivial Diederich--Forn\ae ss indexes (i.e., the index is strictly less than 1). Kohn \citep{Ko99} in 1999 has shown the index is related to orders of the preserved Sobolev space $W^s(\overline{\Omega})$ by $\bar{\partial}$-Neumann operator\slash Bergman projection. He proved the greater the index, the greater order of $W^k(\overline{\Omega})$ can be obtained. Moreover, Berndtsson--Charpentier in \citep{BC00} asserted that $\bar{\partial}$-Neumann operator and Bergman projection preserves $W^k(\overline{\Omega})$ if $s$ is less than a half of the Diederich--Forn\ae ss index. However, an exact relation between the Diederich--Forn\ae ss index and order of $w^k(\overline{\Omega})$ is yet to be found.
	
	In contrast to domains of non-trivial index, domains of trivial index (i.e. the index equals to 1) have many nice properties. These domains have many similarities with the well-understood strongly pseudoconvex domains. Thus, understanding the trivial index is necessary and important. In the 2000s, Forn\ae ss--Herbig (see \citep{FH07} and \citep{FH08}) proved that the domain is of trivial index if it admits a plurisubharmonic defining function on the boundary. Recently, Krantz--Liu--Peloso \citep{KLP16} improved Forn\ae ss--Herbig's condition in $\mathbb{C}^2$ and exhibited more pseudoconvex domains of trivial index (see also \citep{Li16}). 
	
	Just a while ago, an index associated to boundary was found by the author \citep{Li17}. He showed this new index is equivalent to the Diederich--Forn\ae ss index. One feature of this new index is that it is often easier to compute. With this advantage, we discuss a class of domains with the trivial index from the viewpoints of analysis and geometry. 

	Let $\Omega\subset\mathbb{C}^n$ be a bounded pseudoconvex domain with smooth boundary. The \textit{Diederich--Forn\ae ss index} is defined to be the supremum over all exponents $0<\eta<1$ for which there exists a defining function $\rho$ for $\Omega$ such that $-(-\rho)^\eta$ is plurisubharmonic on $\Omega$ (see also \citep{DF77b}). In this paper, we consider the set $\Sigma$ of non-strongly pseudoconvex points and we assume $\Sigma$ is quipped with a complex structure. It is known that $\Sigma$ can be either a complex submanifold or a Levi-foliation. Our first theorems state that $\Omega$ is of trivial index if $\Sigma$ is simply connected in the case of smooth submanifolds or simply connected leafwise in the case of Levi-folations. This condition surprisingly coincides with Boas--Straube's condition which guarantees the global regularity of the $\bar{\partial}$-Neumann operator\slash Bergman projection, even if the motivations and methods between the two results are quite different. The interested reader is referred to Boas--Straube \citep{BS93} and \citep{BS99}. The mentioned theorems are Theorem \ref{2dim}\slash Theorem \ref{3dim} for $\Sigma$ to be complex submanifolds and more generally Theorem \ref{smooth} for $\Sigma$ to be smooth submanifolds. 
	
	To prove the theorems, we first use the equivalent index defined in \citep{Li17} to show the index is 1 if a (real) partial differential equation can be solved in boundary. We then analyze the equation to show that it is a system of partial differential equations that can be decoupled and the compatibility condition always holds locally. Then a global solution exists if $\Omega$ is simply connected. This proves the trivial index. The ideas of proof involve a detailed analysis on local coordinate charts of $\Sigma$ to obtain the equation and a thorough computation to verify the local compatibility condition. 
	
	Conversely, we also study how the trivial index affects solvability of the aforementioned differential equations. Precisely, without assuming simply connectedness of $\Sigma$, we prove that there exists a sequence of functions approaching the solution of the equation in $L^1$ norm if the Diederich--Forn\ae ss index is 1. If the sequence defines a distribution, then the differential equation is solvable in the distributional sense. This theorem also has a version in complex submanifolds and one in Levi-foliations. More conclusions are available in Theorem \ref{preco2} and Corollary \ref{preco3}. The proof is inspired by Caccioppoli's inequality in the field of geometric analysis and contains careful estimates for seeking the solution.
	
	We also modify the classical de Rham cohomology to fit in our context. With this modification, we can reformulate our theorems in the language of cohomology (see Theorem \ref{co2} and Theorem \ref{co3}). Our theorems relate the index with both analytical properties and topological properties of the boundary. The Diederich--Forn\ae ss index can be compared with the Atiyah-Singer index. 
	
	Finally, we prove that if the Levi-flat points form a real curve, then the Diederich--Forn\ae ss index is 1. Indeed, Theorem \ref{newtheorem} extends a theorem in Krantz--Liu--Peloso \citep{KLP16}, where they need the real curve transversal to $T^{(1,0)}\partial\Omega$.
	
	Since the Diederich--Forn\ae ss index has been introduced in 1977, many mathematicians worked on this topic. Here are a few important works we have not mentioned: Adachi--Brinkschulte \citep{AB14}, Demailly \citep{De87}, Fu--Shaw \citep{FS14}, Herbig--McNeal \citep{HM12b}, Harrington \citep{Ha08}, Krantz--Peloso \citep{KP08} and Range \citep{Ra81}.  
	
	We remind the reader that the smoothness in this paper can be extend to $C^3$.
	
	The outline of the paper is as the following: After some preparation in preliminaries, we globally analyze the Diederich--Forn\ae ss index on $\partial\Omega$ in Section \ref{sec3}. Theorems \ref{2dim}, \ref{3dim}, \ref{preco2} and \ref{co3} and their proofs are contained in this section. In Section \ref{sec4}, we relate our result in Section \ref{sec3} to a type of de Rham cohomology. This relates topological properties with analytical properties of $\partial\Omega$. The main theorems in this section are Theorem \ref{co2} and \ref{co3}. In Section \ref{sec5}, we prove two theorems where the set $\Sigma$ of degenerate Levi-forms is assumed to be a real manifold. We prove the Diederich--Forn\ae ss index is 1 under the same assumption of Boas--Straube \citep{BS93}. We also extend a theorem in Krantz--Liu--Peloso \citep{KLP16}. The main theorems in this section are Theorem \ref{smooth} and Theorem \ref{newtheorem}.

	\section{Preliminaries}\label{sec1}
	We first define pseudoconvexity as follows. This is sometimes also called Levi pseudoconvexity.
		\begin{definition}
		Let $\Omega$ be a bounded domain with $C^2$ boundary in $\mathbb{C}^n$. Let \[\delta(z):=\begin{cases}
		-\distance(z, \partial\Omega) & z\in\Omega\\
		\distance(z, \partial\Omega) & \text{otherwise}.
		\end{cases}\]  be the signed distance function of $\Omega$. The domain $\Omega$ is called pseudoconvex if $\Hessian_\delta (L, L)\geq 0$ for all $(1,0)$ tangent vector field $L$ of $\partial\Omega$.
	\end{definition}
	
	Recall that in \citep{Li17}, we refined the notion of Levi pseudoconvexity. Let $\Sigma$ be the set of points with degenerate Levi-forms. Let $N$ be defined as what follows.
	\[N=\frac{1}{\sqrt{\sum_{j=1}^{n}|\frac{\partial \delta}{\partial z_j}|^2}}\sum_{j=1}^{n}\frac{\partial \delta}{\partial \bar{z}_j}\frac{\partial}{\partial z_j}.\] Here, we use the terminology from K\"{a}hler geometry. The $g$ stands for the standard Euclidean metric on $\mathbb{C}^n$ and $\Hessian_\delta(L, N)=g(\nabla_L\nabla \delta, N)$ stands for Hessian with this metric.

	The reader can check that $N$ has the following properties:
	\begin{enumerate}
		\item $\sqrt{2}N$ is unit vector in $\mathbb{C}^n$.
		\item $N+\overline{N}=\nabla\delta$ in $\mathbb{C}^n$.
		\item $N\delta =\frac{1}{2}$.
	\end{enumerate}

	We obtain the following theorem directly from Theorem 2.8 in \citep{Li17}.
	\begin{theorem}[\citep{Li17}]\label{pre}
			Let $\Omega$ be a bounded pseudoconvex domain with smooth boundary in $\mathbb{C}^n$. Let $L$ be an arbitrary smooth $(1,0)$ tangent vector field on $\partial\Omega$. Let \[\Sigma_L:=\lbrace p\in\partial\Omega: \Hessian_\delta(L, L)=0 \enskip\text{at}\enskip p\rbrace.\] The Diederich--Forn\ae ss index for $\Omega$ is 1 if and only if for any $\eta\in(0,1)$, there exists a smooth function $\psi$ which is defined on a neighborhood of $\Sigma$ in $\partial\Omega$, so that on $\Sigma_L$,
		\[
		\left(\frac{1}{1-\eta}-1\right)\left|\frac{1}{2}\overline{L}\psi+\Hessian_\delta(N, L)\right|^2+\frac{1}{2}\left(\frac{1}{2}\Hessian_\psi(L, L)+g(\nabla_L\nabla_{N}\nabla\delta, L)\right)\leq 0,
		\] for all $L$.
	\end{theorem}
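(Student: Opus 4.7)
The plan is to reduce to the original defining-function formulation of the Diederich--Forn\ae ss index and derive the displayed inequality as the boundary limit of the plurisubharmonicity condition for $-(-\rho)^\eta$. Any smooth defining function of $\Omega$ can be written as $\rho=\delta e^{-\psi}$ for a smooth $\psi$ in a neighborhood of $\partial\Omega$, and conversely every such $\psi$ produces an admissible defining function. Hence the index equals $1$ precisely when, for every $\eta\in(0,1)$, one can choose $\psi$ so that $-(-\rho)^\eta$ is plurisubharmonic on $\Omega$.

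A direct differentiation yields that $-(-\rho)^\eta$ is plurisubharmonic on $\Omega$ if and only if
\[
\Hessian_\rho(L,L)+\frac{1-\eta}{-\rho}\,|L\rho|^{2}\geq 0
\]
for every $(1,0)$-vector $L$ at every interior point. Off $\Sigma$ the first summand is strictly positive in tangential directions, so this bound holds automatically for any bounded $\psi$. The genuine constraint therefore localizes to $\Sigma$: one substitutes $\rho=\delta e^{-\psi}$, expands both summands up to boundary order, and passes to a one-sided limit as $z\to\partial\Omega$ with $z\in\Sigma_L$.

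The expansion identifies $|L\rho|^{2}$ modulo lower order with the square of the cross-term $\tfrac{1}{2}\overline{L}\psi+\Hessian_\delta(N,L)$, using $N\delta=\tfrac{1}{2}$ to extract the normal component of $\nabla\delta$; and splits $\Hessian_\rho(L,L)$ into the Levi form $\Hessian_\delta(L,L)$ (which vanishes on $\Sigma_L$), the tangential Hessian $\Hessian_\psi(L,L)$, and the transversal derivative $g(\nabla_L\nabla_N\nabla\delta,L)$, which is the first surviving contribution from $\Hessian_\delta(L,L)$ once the Levi form is forced to vanish on $\Sigma_L$; here the identity $N+\overline{N}=\nabla\delta$ from the preliminaries is used to rewrite the interior normal derivative intrinsically on $\partial\Omega$. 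Collecting these with the weight $\tfrac{1}{1-\eta}-1=\tfrac{\eta}{1-\eta}$ gives exactly the displayed inequality.

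The delicate point, and the reason this statement is pulled from Theorem~2.8 of \citep{Li17} rather than proved afresh here, is the sufficiency direction. Necessity is a straightforward boundary limit of the interior inequality. For sufficiency one needs that a function $\psi$ defined only near $\Sigma$ and satisfying only the displayed boundary inequality on $\Sigma_L$ can be extended and patched with strict pseudoconvexity off $\Sigma$ so that the full interior inequality holds throughout $\Omega$; this patching, which uses a cut-off together with the compactness of $\partial\Omega\setminus U$ for any neighborhood $U$ of $\Sigma$, is the genuine content of the cited theorem. The main obstacle, were one to prove the statement from scratch, would be this patching step together with the careful identification of the third-order transversal derivative with $g(\nabla_L\nabla_N\nabla\delta,L)$.
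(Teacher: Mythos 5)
The paper does not actually prove this theorem; it imports it without argument from Theorem~2.8 of \citep{Li17}, so there is no in-paper proof to compare against. Your sketch starts correctly (writing a general defining function as $\rho=\delta e^{-\psi}$, and reducing plurisubharmonicity of $-(-\rho)^\eta$ to the interior inequality $\Hessian_\rho(L,L)+\frac{1-\eta}{-\rho}|L\rho|^2\geq 0$), and you correctly flag that the sufficiency direction requires a gluing/cut-off argument. But the central computational step is misattributed, and this is a genuine gap.

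You claim that $|L\rho|^{2}$ is identified ``modulo lower order'' with the square of the cross-term $\frac{1}{2}\overline{L}\psi+\Hessian_\delta(N,L)$. This cannot be right as stated: for $L$ tangential to $\partial\Omega$ one has $L\delta=0$, so $L\rho=-\delta e^{-\psi}L\psi$ vanishes to first order at the boundary, and $\frac{1-\eta}{-\rho}|L\rho|^{2}$ contributes only $O(\delta)$; it does not produce the squared cross-term or its $\frac{1}{1-\eta}-1$ weight. The actual mechanism is the tangential--normal decomposition $L=L_\tau+aN$ of an arbitrary $(1,0)$ vector near the boundary. Writing $u=-(-\rho)^\eta$ and expanding $\Hessian_u(L_\tau+aN,\,L_\tau+aN)$ as a quadratic form in $a\in\mathbb{C}$, nonnegativity for all $a$ is equivalent to a discriminant inequality $\Hessian_u(L_\tau,L_\tau)\,\Hessian_u(N,N)\geq|\Hessian_u(N,L_\tau)|^{2}$. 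Near $\partial\Omega$ the diagonal entry $\Hessian_u(N,N)$ is dominated by the $\frac{1-\eta}{-\rho}|N\rho|^{2}$ contribution (which blows up like $\delta^{-1}$), while the off-diagonal entry $\Hessian_u(N,L_\tau)$, computed with $\rho=\delta e^{-\psi}$, $N\delta=\frac{1}{2}$ and $L_\tau\delta=0$, has boundary limit a fixed multiple of $\frac{1}{2}\overline{L_\tau}\psi+\Hessian_\delta(N,L_\tau)$ (up to the paper's sign convention). It is this off-diagonal Hessian entry --- not $|L\rho|^{2}$ --- that gives the squared cross-term, and the ratio of diagonal entries after dividing through by the common $\delta$-power produces the weight $\frac{\eta}{1-\eta}$. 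The remaining diagonal piece $\Hessian_u(L_\tau,L_\tau)$ expands as $e^{-\psi}\bigl[\Hessian_\delta(L_\tau,L_\tau)-\delta\,\Hessian_\psi(L_\tau,L_\tau)+O(\delta)\bigr]$; on $\Sigma_L$ the Levi form vanishes, and its first normal variation $g(\nabla_L\nabla_N\nabla\delta,L)$ together with the $\Hessian_\psi$ term supplies the second group in the displayed inequality. Without the $L=L_\tau+aN$ decomposition and the minimization (discriminant) over the normal coefficient, the stated boundary inequality does not emerge, so this step should be made explicit before the ``boundary limit'' language can carry the weight you place on it.
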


 We also need the following lemma which was proved in \citep{Li17}.
\begin{lemma}[\citep{Li17}]\label{basic2}
	Let $\Omega$ be a bounded pseudoconvex domain with smooth boundary in $\mathbb{C}^n$. Suppose $L$ is a $(1,0)$ tangent vector field so that $\Hessian_r(L, L)=0$ at $p\in \partial\Omega$. Assume, $T_j$ for $1\leq j\leq n-2$ are $(1,0)$ tangent vector fields and $L, T_1, T_2, \dots, T_{n-2}$ are orthogonal at $p$. Then $\Hessian_\delta (L, T_j)=0$ for $1\leq j\leq n-2$ at $p$.
\end{lemma}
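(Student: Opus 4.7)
My plan is to derive the conclusion from the Cauchy--Schwarz inequality for positive semi-definite Hermitian forms, applied to the restriction of $\Hessian_\delta$ to the $(1,0)$ tangent space $T^{(1,0)}_p\partial\Omega$. The preliminary point I would check is that this restriction is indeed a positive semi-definite Hermitian form: Hermitian symmetry $\Hessian_\delta(X,Y) = \overline{\Hessian_\delta(Y,X)}$ on $T^{(1,0)} \times T^{(1,0)}$ comes from identifying $\Hessian_\delta$ there with the Levi form $\partial\bar\partial\delta$ up to normalization, while semi-positivity is precisely the definition of Levi pseudoconvexity recalled in Section \ref{sec1}.

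With this setup, for each fixed $j \in \{1,\ldots,n-2\}$ and every $\lambda \in \mathbb{C}$ I would expand
\[
0 \leq \Hessian_\delta(L + \lambda T_j,\, L + \lambda T_j) = \Hessian_\delta(L,L) + 2\Re\bigl(\bar\lambda\,\Hessian_\delta(L, T_j)\bigr) + |\lambda|^2\,\Hessian_\delta(T_j, T_j),
\]
using Hermitian symmetry to combine the two cross terms. Since $\Hessian_\delta(L,L) = 0$ by hypothesis, if $\Hessian_\delta(L, T_j)$ were nonzero, the choice $\lambda = -t\,\Hessian_\delta(L, T_j)/|\Hessian_\delta(L, T_j)|$ with sufficiently small $t > 0$ would make the linear-in-$t$ term dominate the quadratic one and violate the displayed inequality. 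This forces $\Hessian_\delta(L, T_j) = 0$, which is the claim.

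Conceptually this is just the standard fact that a vector in the kernel of a positive semi-definite Hermitian form is orthogonal, under that form, to the entire space, so I do not anticipate any real obstacle; the only step worth double-checking is the Hermitian structure of $\Hessian_\delta$ on $T^{(1,0)} \times T^{(1,0)}$ in the author's metric/normalization conventions, which is routine. It is worth noting that the orthogonality of $L, T_1, \ldots, T_{n-2}$ in the ambient Euclidean metric plays no role in the argument itself; it seems to be included only to fix a convenient orthogonal frame for subsequent applications of the lemma elsewhere in the paper.
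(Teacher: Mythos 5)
Your argument is correct, and it is the standard one: the Levi form $\Hessian_\delta$ restricted to $T^{(1,0)}_p\partial\Omega$ is a positive semi-definite Hermitian form by the definition of pseudoconvexity recalled in Section~\ref{sec1}, so the expansion of $\Hessian_\delta(L+\lambda T_j,\, L+\lambda T_j)\geq 0$ together with $\Hessian_\delta(L,L)=0$ forces $\Hessian_\delta(L,T_j)=0$. The paper itself does not reprove this lemma but merely cites \citep{Li17}, so there is no in-text proof to compare against; your remark that the orthogonality hypothesis on $L,T_1,\dots,T_{n-2}$ plays no role in the argument is also accurate --- it is there to fix a frame for later use, not because the Cauchy--Schwarz step requires it.
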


\section{Global analysis on the Diederich-Forn\ae ss index of $\partial\Omega$}\label{sec3}

We let $\Sigma$ be the set of points with degenerate Levi-form. That is \[\Sigma=\lbrace p\in\partial\Omega: \Hessian_\delta(L, L)=0 \enskip\text{at}\enskip p\rbrace,\] for some non-zero $(1,0)$-tangent vector $L\in T^{(1,0)}\partial\Omega$. In this section, we assume $\Sigma$ admits a geometric structure. In other words, we assume two cases: either $\Sigma$ is a complex manifold with boundary or a Levi-foliation (where each leaf of the foliation is again a complex manifold with boundary). They inherit a K\"{a}hler metric and complex structure from $\mathbb{C}^n$. From now on, we will use $U_\alpha$ denote a coordinate chart of $\Sigma$. 

Let a real function $u$ be in $\mathbb{C}^n$ with coordinates $z_1=x_1+\i y_1, z_2=x_2+\i y_2,\dots, z_n=x+\i y_n$. Given that $u$ solves the equation $\frac{\partial u}{\partial x_j}=f_j$ and $\frac{\partial u}{\partial y_j}=g_j$ for $f_j$ and $g_j$ smooth and for $1\leq j\leq n$, then the following necessary condition holds. It is obtained by a straightforward calculation.

\begin{lemma}\label{basicnoc}
	Let $f_j$ and $g_j$ be smooth real functions in $D\subset\mathbb{C}^n$. Then the equations \[\frac{\partial f_j}{\partial x_i}=\frac{\partial f_i}{\partial x_j},\quad\frac{\partial f_j}{\partial y_i}=\frac{\partial g_i}{\partial x_j}\quad\text{and}\quad\frac{\partial g_j}{\partial y_i}=\frac{\partial g_i}{\partial y_j}\] are equivalent to the following identities:
		\[\frac{\partial h_j}{\partial\bar{z}_i}=\frac{\partial h_i}{\partial \bar{z}_j}\quad\text{and}\quad\frac{\partial h_j}{\partial z_i}=\frac{\partial \bar{h}_i}{\partial \bar{z}_j}\] for all $1\leq i, j\leq n$, where $h_j=f_j+\i g_j$.
\end{lemma}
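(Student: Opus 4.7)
The plan is purely computational: unfold both complex identities into their real and imaginary parts via the Wirtinger formulas
\[
\frac{\partial}{\partial z_i}=\tfrac{1}{2}\!\left(\frac{\partial}{\partial x_i}-\i\frac{\partial}{\partial y_i}\right),\qquad \frac{\partial}{\partial \bar z_i}=\tfrac{1}{2}\!\left(\frac{\partial}{\partial x_i}+\i\frac{\partial}{\partial y_i}\right),
\]
substitute $h_j=f_j+\i g_j$ (and $\bar h_i=f_i-\i g_i$), and compare with the three real identities. This is the kind of lemma where the only thing to guard against is a sign slip.

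First I would carry out the expansion of $\partial h_j/\partial \bar z_i$ and $\partial h_i/\partial \bar z_j$ and record the two real equations obtained from equating real and imaginary parts; this gives one equation relating $\partial f_j/\partial x_i-\partial f_i/\partial x_j$ to $\partial g_j/\partial y_i-\partial g_i/\partial y_j$, and one equation relating the mixed partials $\partial f_j/\partial y_i+\partial g_j/\partial x_i$ to $\partial f_i/\partial y_j+\partial g_i/\partial x_j$. I would then do the same for the identity $\partial h_j/\partial z_i=\partial \bar h_i/\partial \bar z_j$, producing two further real equations of the same shape but with opposite signs in the key places.

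For the reverse implication I add and subtract the two real equations coming from the real parts of the two complex identities: the sum kills the $g$-terms and yields $\partial f_j/\partial x_i=\partial f_i/\partial x_j$, the difference kills the $f$-terms and yields $\partial g_j/\partial y_i=\partial g_i/\partial y_j$. Adding and subtracting the two equations coming from the imaginary parts produces, in exactly the same way, $\partial f_j/\partial y_i=\partial g_i/\partial x_j$ (using the symmetry $i\leftrightarrow j$). For the forward implication I simply reverse this linear combination, re-assembling the four real equations out of the three given ones and then repackaging them as the real and imaginary parts of the two complex identities.

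There is no genuine obstacle here; the statement is an algebraic reformulation and the substantive content is only that the four real equations hidden inside the two complex identities are \emph{exactly} a non-degenerate linear combination of the three real equations one already has together with their symmetric counterparts. The only care needed is in tracking the two $\i$'s that arise when both $h_j=f_j+\i g_j$ and a Wirtinger derivative contribute an imaginary unit, so that the signs on the $\partial g_j/\partial y_i$ and $\partial f_j/\partial y_i$ terms come out correctly; once these are laid out side by side the equivalence reads off directly.
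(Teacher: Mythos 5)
Your proposal is correct and is precisely the ``straightforward calculation'' the paper refers to without writing out: expanding both complex identities via Wirtinger derivatives yields four real equations, and the linear combinations you describe (sum and difference of the real-part equations, sum and difference of the imaginary-part equations) reduce them exactly to the three stated real identities together with the $i\leftrightarrow j$ swap of the mixed one. The bookkeeping of signs you flag as the only hazard is indeed the only place anything could go wrong, and you have it right.
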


\begin{lemma}\label{lem1}
Let $\Omega$ be a bounded pseudoconvex domain with smooth boundary in $\mathbb{C}^n$. Let $\Sigma$ be the set of points with degenerate Levi-forms. Assume $\Sigma$ is a complex submanifold with boundary of dimension $m$ in $\partial\Omega$ (with induced complex structure and metric) and the Levi-form $\Hessian_\delta(L, L)\vert_{p\in\Sigma}=0$ if and only if $L_p\in T^{(1,0)}\Sigma$.  Let $\lbrace z_j\rbrace_{j=1}^m$ be local coordinates in a coordinate chart $U_\alpha$. We denote $N_\delta$ with $N$. Then we have the following identities:
\begin{enumerate}
	\item \[\frac{\partial}{\partial z_j}\Hessian_\delta(N, \frac{\partial}{\partial z_i})=\frac{\partial}{\partial \bar{z}_i}\Hessian_\delta(\overline{N}, \frac{\partial}{\partial \bar{z}_j}),\]
	\item \[\frac{\partial}{\partial z_j}\Hessian_\delta(\overline{N}, \frac{\partial}{\partial \bar{z}_i})=\frac{\partial}{\partial z_i}\Hessian_\delta(\overline{N}, \frac{\partial}{\partial \bar{z}_j}).\]
\end{enumerate}
\end{lemma}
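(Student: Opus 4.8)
The plan is to recognize the two claimed identities as the integrability conditions of Lemma~\ref{basicnoc}, applied to the specific functions $h_j := \Hessian_\delta(N, \frac{\partial}{\partial z_j})$ on the coordinate chart $U_\alpha$, and then to produce the function $u$ whose mixed derivatives give these $h_j$, so that the conclusion follows from the ``only if'' half of Lemma~\ref{basicnoc}. The natural candidate is to write $\Hessian_\delta(N, \frac{\partial}{\partial z_j})$ in terms of derivatives of a scalar potential: on $\Sigma$ we have $N\delta = \tfrac12$ and $N + \overline N = \nabla\delta$, so $\Hessian_\delta(N, \frac{\partial}{\partial z_j}) = g(\nabla_{\partial/\partial z_j}\nabla\delta, N)$, and since $\mathbb{C}^n$ is flat the Hessian is just the ordinary matrix of second partials of $\delta$; contracting with $N$ (whose components are $\tfrac{\partial\delta}{\partial\bar z_k}$ up to normalization) should express $h_j$ as a $\bar z_j$- or $z_j$-derivative of a single function built from $\delta$ and $|\nabla\delta|$. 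Once $h_j$ is exhibited in gradient form, Lemma~\ref{basicnoc} immediately yields both $\frac{\partial h_j}{\partial\bar z_i} = \frac{\partial h_i}{\partial\bar z_j}$ (which is identity~(1) after identifying $\overline{h_i}$ with $\Hessian_\delta(\overline N, \frac{\partial}{\partial\bar z_i})$) and $\frac{\partial h_j}{\partial z_i} = \frac{\partial \overline{h_i}}{\partial\bar z_j}$ (which is identity~(2)).

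First I would fix the coordinate chart $U_\alpha$ on $\Sigma$ with coordinates $z_1,\dots,z_m$ and, crucially, choose the local defining-type setup so that on $\Sigma$ the vector fields $\frac{\partial}{\partial z_j}$ lie in $T^{(1,0)}\Sigma$; by the hypothesis of the lemma these are exactly the degenerate directions of the Levi form, so Lemma~\ref{basic2} applies and tells us $\Hessian_\delta(\frac{\partial}{\partial z_i}, \frac{\partial}{\partial z_j}) = 0$ along $\Sigma$ for all $i,j$. This vanishing is what makes the cross-differentiation of the $h_j$ work: when I differentiate $h_j = \Hessian_\delta(N, \frac{\partial}{\partial z_j})$ in the direction $\frac{\partial}{\partial z_i}$ (or $\frac{\partial}{\partial\bar z_i}$) the terms that would otherwise spoil symmetry are precisely $\Hessian_\delta$ evaluated on pairs of tangent $(1,0)$ vectors, which die on $\Sigma$. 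Second, I would carry out this differentiation explicitly in flat coordinates: write $\Hessian_\delta(N, \frac{\partial}{\partial z_j}) = \sum_k N^k \,\delta_{k\bar j}$ where $\delta_{k\bar j} = \frac{\partial^2\delta}{\partial z_k\partial\bar z_j}$ and $N^k = \frac{1}{|\partial\delta|}\frac{\partial\delta}{\partial\bar z_k}$, differentiate, and collect terms, using that third derivatives of $\delta$ commute freely and using Lemma~\ref{basic2} (and its obvious conjugate) to drop the tangential-tangential Hessian terms.

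The main obstacle I anticipate is bookkeeping the normalization factor $|\partial\delta| = \bigl(\sum_k |\tfrac{\partial\delta}{\partial z_k}|^2\bigr)^{1/2}$ that appears in $N$. Its derivatives are not symmetric on their own, so one has to check that the asymmetric pieces coming from differentiating $1/|\partial\delta|$ either cancel against each other or again reduce, via $\delta_{k\bar j}$ contracted with tangent vectors, to something that vanishes on $\Sigma$ by Lemma~\ref{basic2}. An alternative, perhaps cleaner, route that avoids this is to not contract with the normalized $N$ at all but first prove the identities for the \emph{unnormalized} field $\widetilde N = \sum_k \frac{\partial\delta}{\partial\bar z_k}\frac{\partial}{\partial z_k}$ — for which $h_j$ is literally $\frac{\partial}{\partial z_j}$ of the function $\sum_k |\tfrac{\partial\delta}{\partial z_k}|^2$ up to curvature-free corrections — and then observe that on $\Sigma$ the two differ by a positive smooth factor whose gradient contributes only tangential-tangential Hessian terms, hence nothing. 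Either way, once the gradient representation of $h_j$ on $U_\alpha$ is established, invoking Lemma~\ref{basicnoc} closes the argument; the bulk of the work is the ``thorough computation to verify the local compatibility condition'' promised in the introduction.
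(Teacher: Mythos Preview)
Your plan has a genuine gap for identity~(1), and the ``explicit potential'' strategy is close to circular. In the paper's architecture, Lemma~\ref{lem1} is precisely what is used (in Lemma~\ref{lem2}) to \emph{produce} the local potential $\phi$ with $\frac{\partial\phi}{\partial\bar z_j}=h_j$; you are proposing to prove Lemma~\ref{lem1} by first exhibiting that same $\phi$. The concrete candidate you float, $|\partial\delta|^2$ via the unnormalized field $\widetilde N$, does not do the job: when you compute $\frac{\partial}{\partial\bar z_j}|\partial\delta|^2$ along $\Sigma$ you pick up, besides the desired $\Hessian_\delta(\widetilde N,\frac{\partial}{\partial z_j})$, a second term built from the $(0,2)$-Hessian $\frac{\partial^2\delta}{\partial\bar w_a\partial\bar w_b}$ paired with $\frac{\partial}{\partial\bar z_j}$ and $\overline{\widetilde N}$. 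That is \emph{not} a Levi-form term, so Lemma~\ref{basic2} says nothing about it, and it does not vanish on $\Sigma$ in general.

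The deeper missing ingredient shows up when you try the direct-differentiation route for identity~(1). That identity asserts $\frac{\partial}{\partial z_j}h_i=\overline{\frac{\partial}{\partial z_i}h_j}$, i.e.\ it compares a holomorphic derivative of one quantity to an antiholomorphic derivative of another; commuting third partials of $\delta$ and killing tangential-tangential Levi terms via Lemma~\ref{basic2} is not enough to force this. The paper supplies the extra input: since $N-\overline N$ is tangent to $\partial\Omega$ and the Levi form $\Hessian_\delta(X,X)$ is nonnegative on all of $\partial\Omega$ and vanishes on $\Sigma$ for $X\in T^{(1,0)}\Sigma$, the minimum principle gives $(N-\overline N)\,\Hessian_\delta(X,X)=0$ on $\Sigma$, and polarization upgrades this to $(N-\overline N)\,\Hessian_\delta(T,L)=0$. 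Expanding $N\Hessian_\delta(T,L)$ and $\overline N\Hessian_\delta(T,L)$ separately (here Lemma~\ref{basic2} enters exactly as you anticipated, to drop the $[N,T]$- and $\nabla_{\overline N}L$-pieces) then yields $g(\nabla_T\nabla_N\nabla\delta,L)=g(\nabla_{\overline L}\nabla_{\overline N}\nabla\delta,\overline T)$, which is identity~(1). Your outline never invokes this minimum-principle step, and without it identity~(1) does not follow. Identity~(2), by contrast, is genuinely the symmetric-in-$(i,j)$ statement you describe, and your direct computation using $[\frac{\partial}{\partial z_i},\frac{\partial}{\partial z_j}]=0$ together with Lemma~\ref{basic2} is essentially what the paper does there.
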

\begin{proof}

Let $T, L\in T^{(1,0)}\Sigma$. We first prove $Ng(\nabla_T\nabla\delta, L)=\overline{N}g(\nabla_T\nabla\delta, L)$. For this, we just need to show that \[(N-\overline{N})g(\nabla_T\nabla\delta, L)=0.\] By calculation, we know that
\[\begin{split}
&4g(\nabla_T\nabla\delta, L)\\=&g(\nabla_{T+L}\nabla\delta, T+L)-g(\nabla_{T-L}\nabla\delta, T-L)\\&+\i g(\nabla_{T+\i L}\nabla\delta, T+\i L)-\i g(\nabla_{T-\i L}\nabla\delta, T-\i L).
\end{split}\]
Since $L\pm T$, $L\pm \i T$ are $(1, 0)$-tangent vectors, then we have that \[g(\nabla_{T\pm L}\nabla\delta, T\pm L)=0\] and \[g(\nabla_{T\pm \i L}\nabla\delta, T\pm \i L)=0\] on $\Sigma$. By pseudoconvexity, we have \[(N-\overline{N})g(\nabla_{T\pm L}\nabla\delta, T\pm L)=0\] and \[(N-\overline{N})g(\nabla_{T\pm \i L}\nabla\delta, T\pm \i L)=0.\] The reason is as follows. Consider that $N-\overline{N}$ is tangent to $\partial\Omega$ and $g(\nabla_X\nabla\delta, X)\geq 0$ for $X\in T^{(1,0)}\partial\Omega$. Therefore, $g(\nabla_X\nabla\delta, X)=0$ obtains the minimum. Hence, the derivative should be vanishing. This gives \[(N-\overline{N})g(\nabla_T\nabla\delta, L)=0.\]

With the fact that the curvature tensor vanishes for $\mathbb{C}^n$, we are going to calculate 
\[\begin{split}
&Ng(\nabla_T\nabla\delta, L)\\=&g(\nabla_N\nabla_T\nabla\delta, L)+g(\nabla_T\nabla\delta, \nabla_{\overline{N}}L)\\=&g(\nabla_T\nabla_N\nabla\delta, L)+g(\nabla_{[N, T]}\nabla\delta, L)+g(\nabla_T\nabla\delta, \nabla_{\overline{N}}L).
\end{split}
\]
We are going to compute the last two terms in the equation above. Let $\lbrace\xi_j\rbrace_{j=1}^{n}$ be the orthonormal basis of $T_p^{(1,0)}\mathbb{C}^n$. Here we assume that $\xi_1$ is parallel to $L_p$ and $\xi_n=\sqrt{2}N$.
Then at $p\in\partial\Omega$, \[\begin{split}
	g(\nabla_{[N, T]}\nabla\delta, L)=\sum_{j=1}^{n-1}g([N, T], \xi_j)g(\nabla_{\xi_j}\nabla\delta, L)+2g([N, T], N)g(\nabla_{N}\nabla\delta, L).
\end{split}\]
We observe that $g(\nabla_{\xi_j}\nabla\delta, T)=0$ for all $1\leq j\leq n-1$ by Lemma \ref{basic2}. Moreover \[g([N, T], N)=g([N, T], N+\overline{N})=g([N, T], N)=g([N, T], \nabla\delta)=[N, T]\delta=0.\]

Thus, we have that \[g(\nabla_{[N, T]}\nabla\delta, L)=0.\]

To compute $g(\nabla_T\nabla\delta, \nabla_{\overline{N}}L)$, we have that
\[\begin{split}
&g(\nabla_T\nabla\delta, \nabla_{\overline{N}}L)=\sum_{j}^{n-1}g(\nabla_T\nabla\delta, \xi_j)g(\xi_j, \nabla_{\overline{N}}L )+2g(\nabla_T\nabla\delta, N)g(N, \nabla_{\overline{N}}L)\\=&2g(\nabla_T\nabla\delta, N)(\nabla_N\overline{L})\delta=-2\Hessian_\delta(T, N)\Hessian_\delta(N, L).
\end{split}\]

We obtain that \[Ng(\nabla_T\nabla\delta, L)=g(\nabla_T\nabla_N\nabla\delta, L)-2\Hessian_\delta(N, L)\Hessian_\delta(T, N).\]

 Similarly, we have that
\[\begin{split}
\overline{N}g(\nabla_T\nabla\delta, L)=\overline{N}g(\nabla_{\overline{L}}\nabla\delta, \overline{T})=&g(\nabla_{\overline{N}}\nabla_{\overline{L}}\nabla\delta, \overline{T})+g(\nabla_{\overline{L}}\nabla\delta, \nabla_N\overline{T})\\=&g(\nabla_{\overline{L}}\nabla_{\overline{N}}\nabla\delta, \overline{T})+g(\nabla_{[\overline{N},\overline{T}]}\nabla\delta, \overline{T})+g(\nabla_{\overline{L}}\nabla\delta, \nabla_N\overline{T})\\=&g(\nabla_{\overline{L}}\nabla_{\overline{N}}\nabla\delta, \overline{T})-2\Hessian_\delta(N, L)\Hessian_\delta(T, N).
\end{split}\]

Hence, $Ng(\nabla_T\nabla\delta, L)=\overline{N}g(\nabla_T\nabla\delta, L)$
 implies \[g(\nabla_T\nabla_N\nabla\delta, L)=g(\nabla_{\overline{L}}\nabla_{\overline{N}}\nabla\delta, \overline{T}).\]

Let $T=\frac{\partial}{\partial z_i}$ and $L=\frac{\partial}{\partial z_j}$. Using $\nabla_{\frac{\partial}{\partial \bar{z}_i}}\frac{\partial}{\partial z_j}=0$, we obtain that \[\frac{\partial}{\partial z_i}\Hessian_\delta(N, \frac{\partial}{\partial z_j})=\frac{\partial}{\partial \bar{z}_j}\Hessian_\delta(\overline{N}, \frac{\partial}{\partial \bar{z}_i}).\]

We are going to show the second identity. For this, we compute
\[\begin{split}
Tg(\nabla_L\nabla\delta, N)=&g(\nabla_T\nabla_L\nabla\delta, N)+g(\nabla_L\nabla\delta, \nabla_{\overline{T}}N)\\
=&g(\nabla_L\nabla_T\nabla\delta, N)+g(\nabla_{[T, L]}\nabla\delta, N)+g(\nabla_L\nabla\delta, \nabla_{\overline{T}}N)\\
=&Lg(\nabla_T\nabla\delta, N)-g(\nabla_T\nabla\delta, \nabla_{\overline{L}}N)+g(\nabla_{[T, L]}\nabla\delta, N)+g(\nabla_L\nabla\delta, \nabla_{\overline{T}}N).
\end{split}\]

We also compute \[\begin{split}
&g(\nabla_L\nabla\delta, \nabla_{\overline{T}}N)=g(\nabla_L\nabla\delta, g(\nabla_{\overline{T}}, \sqrt{2}N)\sqrt{2}N)\\=&2g(\nabla_L\nabla\delta, N)g(\nabla_T\overline{N}, \nabla\delta)=-2\Hessian_\delta(L, N)\Hessian_\delta(T, N)\\=&g(\nabla_T\nabla\delta, \nabla_{\overline{L}}N).
\end{split}\]

Thus, we have \[Tg(\nabla_L\nabla\delta, N)=Lg(\nabla_T\nabla\delta, N)+g(\nabla_{[T, L]}\nabla\delta, N).\] Let $T=\frac{\partial}{\partial z_i}$ and $L=\frac{\partial}{\partial z_j}$. We find that $[T, L]=0$ and \[\frac{\partial}{\partial z_i}g(\nabla_{\overline{N}}\nabla\delta, \frac{\partial}{\partial \bar{z}_j})=\frac{\partial}{\partial z_j}g(\nabla_{\overline{N}}\nabla\delta, \frac{\partial}{\partial \bar{z}_i})\] which completes the proof.
\end{proof}

\begin{lemma}\label{lem2}
	Let $\Omega$ be a bounded pseudoconvex domain with smooth boundary in $\mathbb{C}^n$. Let $\Sigma$ be the set of points with degenerate Levi-forms. Assume $\Sigma$ is a complex submanifold with boundary of dimension $m$ in $\partial\Omega$ (with induced complex structure and metric) and the Levi-form $\Hessian_\delta(L, L)\vert_{p\in\Sigma}=0$ if and only if $L_p\in T^{(1,0)}\Sigma$. We consider a coordinate chart $U_\alpha$ of $\Sigma$. Let the coordinates be $\lbrace z_j\rbrace_{j=1}^m$. Then for arbitrary $1\leq i, j\leq m$, we have $\Hessian_\delta(N, \frac{\partial}{\partial z_i})=\frac{\partial}{\partial \bar{z}_i}\phi$ locally in a coordinate chart $U_\alpha$, for a locally defined real function $\phi$. Particularly, we have $\Hessian_\delta(N, L)=\overline{L}\phi$ for arbitrary $L\in T^{(1,0)}U_\alpha$. Moreover, in $U_\alpha$, we have that $g(\nabla_L\nabla_N\nabla\delta, L)=\Hessian_\phi(L, L)$. 
\end{lemma}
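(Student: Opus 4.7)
The plan is to read Lemma \ref{lem1} as providing the integrability conditions of Lemma \ref{basicnoc}, invoke the Poincar\'e lemma on the simply connected chart $U_\alpha$ to produce $\phi$, and then derive the remaining two identities by direct computation in the flat ambient connection of $\mathbb{C}^n$. Concretely, I would set $h_j := \Hessian_\delta(N, \frac{\partial}{\partial z_j})$ on $U_\alpha$. Under the paper's convention (forced by the polarization identity used in the proof of Lemma \ref{lem1}), the Hessian is conjugate-linear in its second argument, so $\Hessian_\delta(\overline{N}, \frac{\partial}{\partial \bar{z}_j}) = \bar{h}_j$. With this substitution, identity (1) of Lemma \ref{lem1} rewrites as $\partial h_i/\partial z_j = \partial \bar{h}_j/\partial \bar{z}_i$, and the conjugate of identity (2) rewrites as $\partial h_j/\partial \bar{z}_i = \partial h_i/\partial \bar{z}_j$; these are precisely the two compatibility conditions of Lemma \ref{basicnoc} for the overdetermined real system $2\,\partial\phi/\partial \bar{z}_j = h_j$ (with $f_j = 2\Re h_j$, $g_j = 2\Im h_j$). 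Since a coordinate chart $U_\alpha$ of $\Sigma$ may be chosen as a polydisc, hence simply connected, the classical Poincar\'e lemma then yields a real-valued $\phi$ on $U_\alpha$ with $\partial\phi/\partial \bar{z}_i = h_i$. The identity $\Hessian_\delta(N, L) = \overline{L}\phi$ for an arbitrary $L = \sum c_i \frac{\partial}{\partial z_i} \in T^{(1,0)}U_\alpha$ then follows immediately from conjugate-linearity in the second slot: $\Hessian_\delta(N, L) = \sum \bar{c}_i\,h_i = \sum \bar{c}_i\,\partial\phi/\partial \bar{z}_i = \overline{L}\phi$.

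For the third assertion $g(\nabla_L\nabla_N\nabla\delta, L) = \Hessian_\phi(L, L)$, I would expand $\nabla_N\nabla\delta = \sum_a A_a\frac{\partial}{\partial z_a} + \sum_a B_a\frac{\partial}{\partial \bar{z}_a}$ in $\mathbb{C}^n$ and read off $A_j = 2h_j$ from $h_j = g(\nabla_N\nabla\delta, \frac{\partial}{\partial z_j})$ together with $g(\frac{\partial}{\partial z_a}, \frac{\partial}{\partial z_j}) = \frac{1}{2}\delta_{aj}$. The flatness of $\mathbb{C}^n$ then gives $g(\nabla_{\partial/\partial z_i}\nabla_N\nabla\delta, \frac{\partial}{\partial z_j}) = \frac{1}{2}\partial A_j/\partial z_i = \partial h_j/\partial z_i = \partial^2\phi/\partial z_i \partial \bar{z}_j$, and summing with coefficients $c_i \bar{c}_j$ yields $g(\nabla_L\nabla_N\nabla\delta, L) = \sum c_i \bar{c}_j\,\partial^2\phi/\partial z_i \partial \bar{z}_j$. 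Since the induced metric on $\Sigma$ is K\"ahler, the mixed Christoffel symbols vanish and the intrinsic Levi form of $\phi$ reduces to $\Hessian_\phi(L, L) = \sum c_i \bar{c}_j\,\partial^2\phi/\partial z_i \partial \bar{z}_j$, matching the preceding expression and giving the asserted equality.

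The main obstacle is bookkeeping: one has to keep the bilinear versus sesquilinear conventions for $g$ and $\Hessian$ consistent throughout, and verify that the extrinsic computation using the flat connection of $\mathbb{C}^n$ really agrees with the intrinsic K\"ahler Hessian of $\phi$ on $\Sigma$. Once these are settled, the first two claims of the lemma are an immediate consequence of Lemmas \ref{lem1} and \ref{basicnoc} combined with the Poincar\'e lemma on a simply connected coordinate chart, and the third is a short computation in flat coordinates using the relation $\partial\phi/\partial \bar{z}_j = h_j$ established in the first step.
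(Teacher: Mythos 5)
Your proposal is correct and follows essentially the same route as the paper: identify Lemma \ref{lem1} (after conjugating identity (2)) with the compatibility conditions of Lemma \ref{basicnoc}, invoke the Poincar\'e lemma on a simply connected chart to produce $\phi$, and verify the remaining two identities by direct computation in the flat ambient connection. One small bookkeeping slip: with your choice $f_j = 2\Re h_j$, $g_j = 2\Im h_j$ the potential satisfies $\partial\phi/\partial\bar z_j = h_j$, not $2\,\partial\phi/\partial\bar z_j = h_j$ (and the rest of your proposal does use $\partial\phi/\partial\bar z_i = h_i$, so this is evidently a typo rather than a real error). Your observation that the intrinsic Hessian of $\phi$ on the K\"ahler submanifold $\Sigma$ agrees with the extrinsic flat-coordinate expression on the $(1,1)$ part is a useful clarification that the paper's proof passes over implicitly.
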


\begin{proof}
	Since $U_\alpha$ is a coordinate chart, $U_\alpha$ is simply connected. Let $\Hessian_\delta(N, \frac{\partial}{\partial z_i})=f_i+\i g_i$. To check $\Hessian_\delta(N, \frac{\partial}{\partial z_i})=\frac{\partial}{\partial \bar{z}_i}\phi$, we just need to check the condition \[\frac{\partial f_j}{\partial x_i}=\frac{\partial f_i}{\partial x_j},\quad\frac{\partial f_j}{\partial y_i}=\frac{\partial g_i}{\partial x_j}\quad\text{and}\quad\frac{\partial g_j}{\partial y_i}=\frac{\partial g_i}{\partial y_j}.\] By Lemma \ref{basicnoc}, it is equivalent to check the identities \[\frac{\partial}{\partial z_j}\Hessian_\delta(N, \frac{\partial}{\partial z_i})=\frac{\partial}{\partial \bar{z}_i}\Hessian_\delta(\overline{N}, \frac{\partial}{\partial \bar{z}_j})\] and \[\frac{\partial}{\partial\bar{z}_j}\Hessian_\delta(N, \frac{\partial}{\partial z_i})=\frac{\partial}{\partial\bar{z}_i}\Hessian_\delta(N, \frac{\partial}{\partial z_j}).\] Hence, the existence of solution of $\Hessian_\delta(N, \frac{\partial}{\partial z_i})=\frac{\partial}{\partial \bar{z}_i}\phi$ is proved by the preceding lemma.
	
	For general $L$, we assume $L=\sum_{j=1}^{m}\kappa_j\frac{\partial}{\partial z_j}$, where $\lbrace\kappa_j\rbrace_{j=1}^m$ are complex-valued functions. We obtain that \[g(\nabla_N\nabla\delta, L)=\sum_{j=1}^{m}\bar{\kappa}_jg(\nabla_N\nabla\delta, \frac{\partial}{\partial z_j})=\bar{\kappa}_j\frac{\partial}{\partial\bar{z}_j}\phi=\overline{L}\phi.\]
	
	We now prove $g(\nabla_L\nabla_N\nabla\delta, L)=\Hessian_\phi(L, L)$. For this, we just need to prove \[g(\nabla_{\frac{\partial}{\partial z_j}}\nabla_N\nabla\delta, \frac{\partial}{\partial z_j})=\Hessian_\phi(\frac{\partial}{\partial z_j}, \frac{\partial}{\partial z_j}).\] Here, we again assume that $\lbrace z_j\rbrace_{j=1}^m$ are local coordinates. Then using $\nabla_{\frac{\partial}{\partial z_j}}\frac{\partial}{\partial \bar{z}_j}=0$ we have that
	\[\begin{split}
	&\Hessian_\phi(\nabla_{\frac{\partial}{\partial z_j}}\nabla_N\nabla\delta, \frac{\partial}{\partial z_j})\\=&\frac{\partial}{\partial z_j}(\Hessian_\phi(N, \frac{\partial}{\partial z_j}))\\=&\frac{\partial}{\partial z_j}\frac{\partial}{\partial\bar{z}_j}\phi\\=&\Hessian_\phi(\frac{\partial}{\partial z_j},\frac{\partial}{\partial z_j}).
	\end{split}\]
\end{proof}

We will obtain our first theorem which asserts that if $\Sigma$ is simply connected, then the domain $\Omega$ is of trivial index.

\begin{theorem}\label{2dim}
	Let $\Omega$ be a bounded pseudoconvex domain with smooth boundary in $\mathbb{C}^n$. Let $\Sigma$ be the set of points with degenerate Levi-forms. Assume $\Sigma$ is a simply connected complex submanifold with boundary of dimension $m$ in $\partial\Omega$ (with induced complex structure and metric) and the Levi-form $\Hessian_\delta(L, L)\vert_{p\in\Sigma}=0$ if and only if $L_p\in T^{(1,0)}\Sigma$. Then the Diederich-Forn\ae ss index is $1$.
\end{theorem}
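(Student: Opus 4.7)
The plan is to reduce the problem to constructing a single global real function $\phi$ on $\Sigma$ that satisfies the two identities supplied by Lemma \ref{lem2}, namely $\Hessian_\delta(N, L) = \overline{L}\phi$ and $g(\nabla_L \nabla_N \nabla\delta, L) = \Hessian_\phi(L, L)$ for every $L \in T^{(1,0)}\Sigma$. Once such a $\phi$ is in hand, I would extend it smoothly to a neighborhood of $\Sigma$ in $\partial\Omega$ and set $\psi = -2\phi$. By hypothesis, at each point $p \in \Sigma_L$ the vector $L|_p$ lies in $T^{(1,0)}\Sigma$, and one checks that both the gradient term $\tfrac{1}{2}\overline{L}\psi + \Hessian_\delta(N, L)$ and the Hessian term $\tfrac{1}{2}\Hessian_\psi(L, L) + g(\nabla_L \nabla_N \nabla\delta, L)$ vanish identically at $p$. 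Consequently the inequality of Theorem \ref{pre} is attained with equality for every $\eta \in (0,1)$, so the Diederich--Forn\ae ss index equals $1$.

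The main task is therefore the global construction of $\phi$. By Lemma \ref{lem2}, on each coordinate chart $U_\alpha$ of $\Sigma$ there is a local real solution $\phi_\alpha$. On an overlap $U_\alpha \cap U_\beta$ the difference $\phi_\alpha - \phi_\beta$ is real and satisfies $\overline{L}(\phi_\alpha - \phi_\beta) = 0$ for every $L \in T^{(1,0)}\Sigma$; conjugating shows that $L(\phi_\alpha - \phi_\beta) = 0$ as well, so $d(\phi_\alpha - \phi_\beta) \equiv 0$. Hence $\phi_\alpha - \phi_\beta$ is locally constant on each connected component of the overlap, producing a \v{C}ech $1$-cocycle $\{c_{\alpha\beta}\}$ with values in the constant sheaf $\underline{\mathbb{R}}$.

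Since $\Sigma$ is simply connected, the Hurewicz theorem and the universal coefficient theorem give $H^1(\Sigma;\mathbb{R}) = 0$, so $\{c_{\alpha\beta}\}$ is a \v{C}ech coboundary: there exist real constants $\{c_\alpha\}$ with $c_{\alpha\beta} = c_\alpha - c_\beta$. The adjusted functions $\phi_\alpha - c_\alpha$ then agree on overlaps and glue to a globally defined $\phi \in C^\infty(\Sigma, \mathbb{R})$ satisfying both identities everywhere. Extending $\phi$ smoothly to a tubular neighborhood of $\Sigma$ in $\partial\Omega$ and setting $\psi = -2\phi$ completes the construction.

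The step I expect to require the most care is not the cohomological gluing itself but rather the verification that the derivatives $\overline{L}\psi$ and $\Hessian_\psi(L, L)$ appearing in Theorem \ref{pre} are insensitive to the choice of extension of $\phi$ off $\Sigma$. This is where the hypothesis $L|_p \in T^{(1,0)}\Sigma$ is used essentially: for a fixed $p \in \Sigma_L$ one may select a representative vector field $L$ that is tangent to $\Sigma$ throughout a neighborhood of $p$, so that both derivatives become intrinsic to $\Sigma$ at $p$ and agree with the quantities computed in Lemma \ref{lem2}. Once this reduction to intrinsic quantities is secured, the whole argument becomes a \v{C}ech-theoretic gluing on a simply connected space.
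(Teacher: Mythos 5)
Your proposal is correct and follows essentially the same path as the paper: Lemma \ref{lem2} supplies local real primitives of the $1$-form $\Re\bigl(\Hessian_\delta(N,\tfrac{\partial}{\partial z_j})\,d\bar z_j\bigr)$, simple connectedness globalizes them, and Theorem \ref{pre} with $\psi=-2\phi$ finishes the argument. The one real difference is the globalization mechanism: you glue the local solutions $\phi_\alpha$ via a \v{C}ech $1$-cocycle in the constant sheaf and invoke $H^1(\Sigma;\mathbb{R})=0$, whereas the paper defines $\phi(z)=\sum_j\int_C f_j\,dx_j+g_j\,dy_j$ directly as a path integral, with Lemmas \ref{lem1}--\ref{lem2} giving closedness of the integrand and simple connectedness giving path independence. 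These are two standard renderings of the same fact (a closed $1$-form on a simply connected manifold is exact), so the choice is cosmetic. Two points in your version deserve tightening: (1) the \v{C}ech argument needs a good cover so that each $c_{\alpha\beta}$ is a genuine constant rather than merely locally constant, otherwise the coboundary equation $c_{\alpha\beta}=c_\alpha-c_\beta$ does not typecheck; and (2) the extension of $\phi$ off $\Sigma$, which you leave as a one-liner, is carried out in the paper by checking the hypotheses of Whitney's extension theorem locally (using the Taylor data furnished by the formulas in Lemma \ref{lem2}) and then patching with a partition of unity. Your closing concern about the extension-dependence of $\Hessian_\psi(L,L)$ is a sensible one to raise, and the intended resolution is precisely the intrinsic identity $g(\nabla_L\nabla_N\nabla\delta,L)=\Hessian_\phi(L,L)$ from Lemma \ref{lem2} together with taking $L$ tangent to $\Sigma$ near the point in question, as you say.
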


\begin{proof}
		Suppose $\lbrace z_j=x_j+\i y_j\rbrace_{j=1}^m$ are local coordinate of $U_\alpha$. Let $g(\nabla_N\nabla\delta, \frac{\partial}{\partial z_j})=f_j+\i g_j$. Since $\Sigma$ is simply connected, by Lemma \ref{lem1} and \ref{lem2}, \[\phi(z):=\sum_{j=1}^{m}\int_{C} f_jdx_j+g_jdy_j\] is well-defined and smooth in each coordinate $U_\alpha$ of $\Sigma$. Here $C$ is an arbitrary curve starting from a fixed point $a\in\Sigma$ to $z$. One observes that this definition is independent of the choice of coordinate chart and hence is well-defined on $\Sigma$. Therefore, for all smooth $(1,0)$-tangent vector fields $L$ ,we have that $\overline{L}\phi=g(\nabla_N\nabla\delta, L)$ in $\Sigma$. This also gives that $g(\nabla_L\nabla_N\nabla\delta, L)=\Hessian_\delta(L, L)$ in $\Sigma$. Letting $\psi=-2\phi$, we obtain that \[\begin{split}
		&\left(\frac{1}{1-\eta}-1\right)\left|\frac{1}{2}\overline{L}\psi+\Hessian_\delta(N, L)\right|^2+\frac{1}{2}\left(\frac{1}{2}\Hessian_\psi(L, L)+g(\nabla_L\nabla_N\nabla \delta, L)\right)\\=&\left(\frac{1}{1-\eta}-1\right)\left|-\overline{L}\phi+\overline{L}\phi\right|^2+\frac{1}{2}\left(\Hessian_\phi(L, L)+\Hessian_\delta(L, L)\right)=0
		\end{split}\] for all $\eta\in(0,1)$. Thus by Theorem \ref{pre}, if we can extend $\phi$ to a neighborhood of $\Sigma$, the Diederich--Forn\ae ss index of $\Omega$ is 1.

We want to define $\tilde{\phi}$ in the neighborhood of $\Sigma$ in $\partial\Omega$ and require that $\tilde{\phi}=\phi$ on $\Sigma$. For this, we check $\phi$ satisfies the condition of Whitney's extension theorem locally around each point in $\Sigma$. Then we glue the extensions with a partition of unity. Let $\xi_0\in\Sigma$, we consider the coordinate chart $U_{\xi_0}$ of $\xi$ in $\partial\Omega$. Let $\lbrace\xi_1,\xi_2,\dots,\xi_m, \zeta_{m+1}, \dots, \zeta_{n-1}, t\rbrace$ be coordinates for $\partial\Omega$, where $t\in\mathbb{R}$. Here we assume $\lbrace \xi_j\rbrace$ are the coordinates in $\Sigma$. It is enough to extend $\phi$ to the manifold foliated by $\lbrace\frac{\partial}{\partial \xi_j}\rbrace_{j=1}^{m}$. Let $h_j(\xi)=\Hessian_\delta(N, \frac{\partial}{\partial \xi_j})$ for $1\leq j\leq m$. Note that $h_j$ is smooth for all $1\leq j\leq m$. In $\Sigma\cap U_{\xi_0}$, the Taylor expansion at $\xi_1\in\Sigma\cap U_{\xi_0}$ gives,
\[\begin{split}
\phi(\xi_2)=&\phi(\xi_1)+\sum_{|\alpha|+|\beta|\leq s}\frac{(\cfrac{\partial}{\partial\bar{\xi}})^\alpha(\cfrac{\partial}{\partial \xi})^\beta \phi(\xi_1)}{\alpha!\beta!}(\xi-\xi_1)^\beta(\bar{\xi}-\bar{\xi}_1)^\alpha+R_s(\xi,\xi_1)\\=&\phi(\xi_1)+\sum_{|\alpha|+|\beta|\leq s}\frac{(\cfrac{\partial}{\partial\bar{\xi}})^\alpha(\cfrac{\partial}{\partial \xi})^{\beta-(1,0,\dots,0)} h_1(\xi_1)}{\alpha!\beta!}(\xi-\xi_1)^\beta(\bar{\xi}-\bar{\xi}_1)^\alpha\\&+\sum_{|\alpha|+|\beta|\leq s}\frac{(\cfrac{\partial}{\partial\bar{\xi}})^{\alpha-(1,0,\dots,0)}(\cfrac{\partial}{\partial \xi})^{\beta} \bar{h}_1(\xi_1)}{\alpha!\beta!}(\xi-\xi_1)^\beta(\bar{\xi}-\bar{\xi}_1)^\alpha\\&+\dots+\sum_{|\alpha|+|\beta|\leq s}\frac{(\cfrac{\partial}{\partial\bar{\xi}})^{\alpha-(0,0,\dots,1)}(\cfrac{\partial}{\partial \xi})^{\beta} \bar{h}_m(\xi_1)}{\alpha!\beta!}(\xi-\xi_1)^\beta(\bar{\xi}-\bar{\xi}_1)^\alpha+R_s(\xi_1,\xi_2).
\end{split}\]

Since $\lbrace h_j\rbrace_{j=1}^m$ are smooth functions, \[R_s(\xi_1,\xi_2)=o(|\xi_2-\xi_1|^{m-1}),\] when $\xi_1,\xi_2\in\Sigma\cap U_{\xi_0}$ are both close to $\zeta_0$ uniformly. Hence $\phi$ is of class $C^s$ in $\Sigma$ in terms of the $\phi_0$. Similarly, one can see that $\phi$ is of class $C^s$ in $\Sigma$ in terms of the $\phi_\alpha$ for $|\alpha|\leq s$. Hence, by a partition of unity there exists a function $\tilde{\phi}$ in a neighborhood of $\Sigma$ in $\partial\Omega$ so that $\tilde{\phi}=\phi$ on $\Sigma$.
\end{proof}

		In $\mathbb{C}^2$, the theorem can be simplified as follows.
\begin{corollary}
	Let $\Omega$ be a bounded pseudoconvex domain with smooth boundary in $\mathbb{C}^2$ and $\Sigma$ be the Levi-flat sets of $\partial\Omega$. Suppose $\Sigma$ is an open simply connected Riemann surface with boundary. Then the Diederich-Forn\ae ss index is $1$.
\end{corollary}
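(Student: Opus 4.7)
The plan is to derive the corollary as a direct specialization of Theorem \ref{2dim} to the case $n=2$, $m=1$. The only substantive step is to check that, in complex dimension two, the hypothesis linking the kernel of the Levi form to $T^{(1,0)}\Sigma$ is automatic whenever $\Sigma$ carries a Riemann surface structure; after that, the conclusion of Theorem \ref{2dim} applies verbatim.

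First I would observe that for $\Omega\subset\mathbb{C}^2$, the holomorphic tangent bundle $T^{(1,0)}\partial\Omega$ has complex rank one. Thus at every point $p\in\Sigma$, the defining condition of $\Sigma$ (vanishing of $\Hessian_\delta(L,L)$ for some nonzero $(1,0)$-tangent $L$) forces $\Hessian_\delta$ to vanish on the whole one-dimensional space $T^{(1,0)}_p\partial\Omega$. Since $\Sigma$ is assumed to be an open Riemann surface with boundary embedded in $\partial\Omega$, $T^{(1,0)}_p\Sigma$ is also one-dimensional at each $p\in\Sigma$, so the inclusion $T^{(1,0)}_p\Sigma\hookrightarrow T^{(1,0)}_p\partial\Omega$ is an equality. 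Therefore the biconditional
\[
\Hessian_\delta(L,L)\big|_{p\in\Sigma}=0 \iff L_p\in T^{(1,0)}\Sigma
\]
holds trivially, and the simple connectedness of $\Sigma$ is given by hypothesis.

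With these verifications in place, Theorem \ref{2dim} (applied with $m=1$) applies directly: there exists a well-defined real function $\phi$ on $\Sigma$ satisfying $\overline{L}\phi=g(\nabla_N\nabla\delta,L)$ and $\Hessian_\phi(L,L)=g(\nabla_L\nabla_N\nabla\delta,L)$ for every smooth $(1,0)$-tangent field $L$, and $\phi$ extends via Whitney extension (as in the proof of Theorem \ref{2dim}) to a smooth function on a neighborhood of $\Sigma$ in $\partial\Omega$. Setting $\psi=-2\phi$ and invoking Theorem \ref{pre} then yields that the Diederich--Forn\ae ss index of $\Omega$ equals $1$.

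There is essentially no genuine obstacle here: the substance of the argument is contained in Theorem \ref{2dim}, and the only thing to check is the equality $T^{(1,0)}\Sigma = T^{(1,0)}\partial\Omega|_\Sigma$, which is a matter of complex dimension counting specific to $\mathbb{C}^2$. Thus the corollary can be presented as a short dimension check followed by an invocation of Theorem \ref{2dim}.
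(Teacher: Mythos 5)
Your proposal is correct and matches the paper's (implicit) approach: the corollary is stated as a direct specialization of Theorem~\ref{2dim} with $n=2$, $m=1$, and your dimension count showing $T^{(1,0)}_p\Sigma = T^{(1,0)}_p\partial\Omega$ (both of complex rank one) is exactly the routine verification that the biconditional hypothesis of Theorem~\ref{2dim} holds automatically in this setting.
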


A Levi-flat set automatically forms a closed set. When it forms a foliation, it is called Levi-foliation. For this topic we refer the reader to \citep{BDD07} and \citep{St10}. Since leaves of Levi-foliation usually have boundary, the standard definition of trivial foliation does not work here. Therefore, we redefine the notion of \textit{trivial Levi-foliation with boundary} in our context.

\begin{definition}
	Let $\Omega$ be a bounded pseudoconvex domain with smooth boundary in $\mathbb{C}^n$. Let $\Sigma$ be a Levi-flat set of $\partial\Omega$. The set $\Sigma$ is said to be a \textit{trivial Levi-foliation with boundary} if there exists a smooth map $\mathcal{F}$ defined on a neighborhood $U$ of $\Sigma$ and $\mathcal{F}$ preserves the leafwise complex structure so that $\mathcal{F}(\Sigma)\subset \mathbb{B}^m\times[0,1]=\mathcal{F}(U)$ and each leaf of $\Sigma$ is mapped into $\mathbb{B}^m_t=\mathbb{B}^m\times\lbrace t\rbrace$ for some $t$, where $\mathbb{B}^m$ is the $m$-(complex) dimensional unit ball. We call $\Sigma$ a \textit{trivial Levi-foliation with boundary of simply connected $m$-dimensional leaves} if $\Sigma$ is a trivial Levi-foliation with boundary and each leaf of $\Sigma$ is simply connected.
\end{definition}

\begin{theorem}\label{3dim}
		Let $\Omega$ be a bounded pseudoconvex domain with smooth boundary in $\mathbb{C}^n$. Let $\Sigma$ be the set of points with degenerate Levi-forms. Suppose $\Sigma$ is a trivial Levi-foliation with boundary of simply connected $m$-dimensional leaves (with induced complex structure and metric) and the Levi-form $\Hessian_\delta(L, L)\vert_{p\in\Sigma}=0$ if and only if $L_p\in T^{(1,0)}\Sigma_p$, where $\Sigma_p$ is the leaf of $p$. Then the Diederich-Forn\ae ss index is $1$.
\end{theorem}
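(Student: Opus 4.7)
The plan is to adapt the proof of Theorem \ref{2dim} leaf-by-leaf, using the trivializing map $\mathcal{F}$ to glue the leafwise constructions into a single smooth function $\phi$ on $\Sigma$. Observe that Lemmas \ref{lem1} and \ref{lem2} are essentially leafwise statements: all the tangent vectors involved in their proofs lie in $T^{(1,0)}\Sigma_p$, and Lemma \ref{basic2} together with pseudoconvexity ensures the calculations only use the leafwise complex structure. Moreover, on $\Sigma_L$ the vector $L$ necessarily lies in $T^{(1,0)}\Sigma_p$ by hypothesis, so the inequality of Theorem \ref{pre} only needs to be verified in the leafwise directions.

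First, use the trivialization $\mathcal{F}\colon U\to\mathbb{B}^m\times[0,1]$ to pull back the standard holomorphic coordinates on $\mathbb{B}^m$ to a smooth family of leafwise holomorphic coordinates $\{z_1,\dots,z_m\}$ on $\Sigma$, parametrized by $t\in[0,1]$. Next, on each leaf $\Sigma_t\cong \mathbb{B}^m_t$, define
\[
\phi_t(z):=\sum_{j=1}^{m}\int_{C_t}f^t_j\,dx_j+g^t_j\,dy_j,
\]
where $f^t_j+\i g^t_j=\Hessian_\delta(N,\partial/\partial z_j)$ restricted to $\Sigma_t$ and $C_t$ is any path in $\Sigma_t$ from the basepoint $\mathcal{F}^{-1}(z_0,t)$ (for some fixed $z_0\in\mathbb{B}^m$) to $z$. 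Since each $\Sigma_t$ is simply connected, Lemmas \ref{lem1} and \ref{lem2} applied leafwise guarantee that $\phi_t$ is well-defined; because the integrands and endpoints depend smoothly on $t$, the resulting function $\phi$ is smooth on all of $\Sigma$. As in Theorem \ref{2dim}, setting $\psi=-2\phi$ makes the inequality of Theorem \ref{pre} collapse to $0\leq 0$ on $\Sigma_L$ for every $L\in T^{(1,0)}\Sigma_p$.

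To finish, extend $\phi$ to a smooth function $\tilde\phi$ on a neighborhood of $\Sigma$ in $\partial\Omega$ via Whitney's extension theorem exactly as at the end of the proof of Theorem \ref{2dim}, and apply Theorem \ref{pre} with $\psi=-2\tilde\phi$ to conclude that the Diederich--Forn\ae ss index equals $1$. The main obstacle is verifying that the leafwise line-integral construction yields a function which is smooth not only along the leaves but also transversally; this is precisely what the trivial-foliation hypothesis is designed to secure. Without a global trivialization, one would have to worry about monodromy in the transverse direction and about whether leafwise basepoint choices can be made to glue smoothly, so it is the global product structure supplied by $\mathcal{F}$ — rather than the simple connectedness of individual leaves by itself — that does the heavy lifting in passing from Theorem \ref{2dim} to the present theorem.
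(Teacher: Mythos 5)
Your proposal follows essentially the same route as the paper: both exploit the trivialization $\mathcal{F}$ to obtain the product structure $\mathbb{B}^m\times[0,1]$, define $\phi$ by integrating the leafwise exact $1$-form from a fixed per-leaf basepoint (the paper specializes to the radial line segment in $\mathbb{B}^m_t$, which makes transverse smoothness immediate), and then defer to the Whitney-extension step and Theorem \ref{pre} exactly as in Theorem \ref{2dim}. The only stylistic difference is that you allow ``any path in $\Sigma_t$'' and argue smoothness in $t$ afterward, whereas the paper builds in the smooth family of line segments from the start.
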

\begin{proof}
	The proof is very similar to the proof of Theorem \ref{2dim}. Without loss of generality, we work on $\mathcal{F}(\Sigma)\subset\mathcal{F}(U)$. We let the coordinates be $\lbrace z_1, z_2,...,z_m, t\rbrace$ where $z_j=x_j+\i y_j$ for $1\leq j\leq m$. We know that each leaf $\Sigma_t$ of $\mathcal{F}(\Sigma)$ is included in $\mathbb{B}^m_t$. Let $g(\nabla_N\nabla\delta, \frac{\partial}{\partial z_j})=f_j+\i g_j$. We define \[\phi(z):=\sum_{j=1}^{m}\int_{C} f_jdx_j+g_jdy_j\] in $\mathbb{B}^m\times[0,1]$. Here $C$ is the line segment in $\mathbb{B}^m_t$ connecting $(0,0,...,0,t)$ and $(z_1,z_2,...,z_m, t)$. We can see this is globally defined in $\mathcal{F}(U)$. It is not hard to see that on $\mathcal{F}(\Sigma)$, \[\frac{\partial}{\partial\bar{z}_j}\phi=g(\nabla_N\nabla\delta, \frac{\partial}{\partial z_j}).\] 
	
	By the similar argument as in Theorem \ref{2dim}, we complete the proof.
\end{proof}

For the following paragraphs, we will consider the relation of the equation $\overline{L}\phi=\Hessian_\delta(N, L)$ and the Diederich--Forn\ae ss index. But before that, we need a definition to explain the idea easily.

\begin{definition}
	Let $\lbrace\psi_n\rbrace_{n=1}^\infty$ be a sequence of smooth functions defined on a neighborhood of $\Sigma$. We say $\lbrace\psi_n\rbrace_{n=1}^\infty$ \textit{defines a distribution on the interior $\mathring{\Sigma}$} if $\psi_n$ converges to a distribution $T$ in the weak-* topology on $\mathcal{D}'(\mathring{\Sigma})$. In other words, for any test function $\omega\in C_c^\infty(\mathring{\Sigma})$, we have $<\psi_n, \omega>\rightarrow<T, \omega>$ as $n\rightarrow\infty$.
\end{definition}

\begin{remark}
	It is not hard to see that \[<\partial_\alpha\psi_n, \omega>=-<\psi_n, \partial_\alpha\omega>\rightarrow-<T,\partial_\alpha\omega>=<\partial_\alpha T, \omega>,\] as $n\rightarrow\infty$. This means $\lbrace\partial_\alpha\psi_n\rbrace_{n=1}^\infty$ defines $\partial_\alpha T$.
\end{remark}

For the following, we use the notation: $dV=dx_1\wedge dy_1\wedge\cdots\wedge dx_m\wedge dy_m$.

\begin{lemma}\label{prep}
	Let $\Sigma$ be a complex submanifold with boundary of dimension $m$ in $\partial\Omega$ (with induced complex structure and metric). Let $p\in\Sigma$ and $U_p\subset\Sigma$ is a coordinate chart of $p\in U_p$ with a local coordinate $\lbrace\frac{\partial}{\partial z_j}\rbrace_{j=1}^m$. Assume there is a real function $f$ defined on a neighborhood of $\Sigma$ in $\partial\Omega$ satisfying $n|\frac{\partial}{\partial \bar{z}_j}f|^2+\Hessian_f(\frac{\partial}{\partial z_j}, \frac{\partial}{\partial z_j})\leq 0$ on $U_p$ for some $1\leq j\leq m$ and some $n\in\mathbb{Z}^+$. Then there exists an open set $W_p$ containing $p\in S$ so that  $W_p\subset\overline{W_p}\subset U_p$, and \[\int_{W_p} |\frac{\partial}{\partial \bar{z}_j} f|^2\, dV<\frac{C(U_p)}{n^2},\] where $C(U_p)$ is a constant only depending on $U_p$.
\end{lemma}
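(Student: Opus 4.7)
The plan is a Caccioppoli-type argument: I would use a cutoff function, the local identity $\Hessian_f(\frac{\partial}{\partial z_j},\frac{\partial}{\partial z_j}) = \frac{\partial^2 f}{\partial z_j\,\partial \bar{z}_j}$ recorded in the proof of Lemma \ref{lem2}, and recover the $1/n^2$ saving through absorption in a weighted Cauchy--Schwarz inequality.

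First I would fix an open neighborhood $W_p$ of $p$ with $\overline{W_p}\subset U_p$ and choose $\chi\in C_c^\infty(U_p)$ satisfying $0\leq \chi\leq 1$ and $\chi\equiv 1$ on $W_p$. Multiplying the standing hypothesis pointwise by $\chi^2$ and integrating against $dV$ gives
\[
n\int_{U_p}\chi^2\left|\frac{\partial f}{\partial \bar{z}_j}\right|^2 dV \;\leq\; -\int_{U_p}\chi^2\,\frac{\partial^2 f}{\partial z_j\,\partial \bar{z}_j}\, dV.
\]
Since $\frac{\partial^2}{\partial z_j\,\partial \bar{z}_j} = \tfrac{1}{4}(\partial_{x_j}^2+\partial_{y_j}^2)$ and $\chi$ has compact support in $U_p$, integration by parts produces no boundary contribution; using also that $f$ is real one obtains
\[
-\int_{U_p}\chi^2\,\frac{\partial^2 f}{\partial z_j\,\partial \bar{z}_j}\, dV \;=\; 2\,\Re\int_{U_p}\chi\,\frac{\partial \chi}{\partial z_j}\,\frac{\partial f}{\partial \bar{z}_j}\, dV.
\]

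Next I would apply the weighted Cauchy--Schwarz inequality $2|ab|\leq \tfrac{n}{2}a^2+\tfrac{2}{n}b^2$ with $a=\chi\bigl|\tfrac{\partial f}{\partial \bar{z}_j}\bigr|$ and $b=\bigl|\tfrac{\partial \chi}{\partial z_j}\bigr|$. After absorbing the first resulting term back into the left-hand side, the estimate collapses to
\[
\int_{U_p}\chi^2\left|\frac{\partial f}{\partial \bar{z}_j}\right|^2 dV \;\leq\; \frac{4}{n^2}\int_{U_p}\left|\frac{\partial \chi}{\partial z_j}\right|^2 dV.
\]
Restricting the left-hand side to $W_p$, on which $\chi\equiv 1$, and setting $C(U_p):=4\int_{U_p}\bigl|\tfrac{\partial \chi}{\partial z_j}\bigr|^2 dV$ — a quantity depending only on the chart and the fixed cutoff, not on $f$ or $n$ — yields the claim.

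I do not anticipate a genuine obstacle: the argument is the classical Caccioppoli inequality in disguise. The only points requiring care are keeping $\chi$ compactly supported inside $U_p$ so that the integration by parts on the real $2m$-dimensional coordinate patch has no boundary term, and invoking the local formula for the complex Hessian from Lemma \ref{lem2} so that the pointwise hypothesis genuinely reads as a second-order differential inequality in the coordinates $\{z_j\}$ on $\Sigma$. The $1/n^2$ factor in the conclusion is precisely what the absorption step produces, so the matching of constants is automatic.
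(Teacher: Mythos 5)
Your proof is correct and follows essentially the same Caccioppoli-type argument as the paper: cutoff function, integration by parts in the $z_j$-variable, Cauchy--Schwarz, and absorption, arriving at the same constant $C(U_p)=4\int_{U_p}\bigl|\frac{\partial\chi}{\partial z_j}\bigr|^2\,dV$. The only cosmetic difference is that the paper packages the computation through the exponential substitutions $u_1=e^{-nf}$, $u_2=e^{nf}$ and integrates first over the two-dimensional cross-sections $S^\xi_j$ before invoking Fubini, whereas your direct route integrates the hypothesis against $\chi^2$ at once and absorbs via weighted Cauchy--Schwarz rather than dividing through a square root; both are algebraically equivalent.
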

\begin{proof}
	Let $u_1=e^{-nf}$ and $u_2=e^{nf}$. We compute 
	\[\frac{\partial}{\partial z_j} u_1\cdot \frac{\partial}{\partial\bar{z}_j} u_2=-n^2e^{-nf}e^{nf}|\frac{\partial}{\partial\bar{z}_j} f|^2=-n^2|\frac{\partial}{\partial\bar{z}_j} f|^2,\]
	and 
	\[u_1\frac{\partial}{\partial\bar{z}_j} u_2=ne^{-nf}e^{nf}\frac{\partial}{\partial\bar{z}_j} f=n\frac{\partial}{\partial\bar{z}_j} f.\]
	
	Observe that \[u_1\cdot\Hessian_{u_2}(\frac{\partial}{\partial z_j}, \frac{\partial}{\partial z_j})=n\left(n\left|\frac{\partial}{\partial\bar{z}_j} f\right|^2+\Hessian_f(\frac{\partial}{\partial z_j}, \frac{\partial}{\partial z_j})\right)\leq 0\] in $U_p$ which is a neighborhood of $p$. Let $W_p\subset V_p$ be two smaller neighborhood of $p$ so that $W_p\subset\overline{W_p}\subset V_p\subset\overline{V_p}\subset U_p$. Choose a smooth function $\chi(q)$, $q\in U_p$ so that 
	\[\chi(q):=\begin{cases}
	1 & q\in W_p\\
	0 & q\in U_p\backslash V_p.
	\end{cases}\] 
	
	Let $\xi=(\xi_1, \xi_2,\dots,\xi_m)\in U_p$ and $S^\xi_j=\lbrace z=(z_1,z_2,\dots, z_m)\in U_p: z_k=\xi_k \enskip\text{if}\enskip k\neq j\rbrace$ be 2-(real) dimensional cross sections of $U_p$.
	Observe that \[\begin{split}
	0\geq&\int_{U_p\cap S^\xi_j}\chi^2u_1\Hessian_{u_2}(\frac{\partial}{\partial z_j}, \frac{\partial}{\partial z_j})\, dx_j\wedge dy_j\\=&-\int_{U_p\cap S^\xi_j}\frac{\partial}{\partial z_j}(\chi^2u_1)\cdot\frac{\partial}{\partial\bar{z}_j} u_2\, dx_j\wedge dy_j\\=&-\int_{U_p\cap S^\xi_j}2\chi u_1\frac{\partial}{\partial z_j}\chi\cdot\frac{\partial}{\partial\bar{z}_j} u_2\, dx_j\wedge dy_j-\int_{U_p\cap S^\xi_j}\chi^2\frac{\partial}{\partial z_j} u_1 \cdot\frac{\partial}{\partial\bar{z}_j} u_2\, dx_j\wedge dy_j\\\geq&-\left(\int_{U_p\cap S^\xi_j} 4|\frac{\partial}{\partial z_j} \chi|^2\, dx_j\wedge dy_j\right)^{1/2}\left(\int_{U_p\cap S^\xi_j} \chi^2 |u_1\frac{\partial}{\partial\bar{z}_j}  u_2|^2\, dx_j\wedge dy_j\right)^{1/2}\\&-\int_{U_p\cap S^\xi_j}\chi^2\frac{\partial}{\partial z_j} u_1\cdot\frac{\partial}{\partial \bar{z}_j} u_2\, dx_j\wedge dy_j.
	\end{split}\]
	
	Hence, we obtain that \[\left(\int_{U_p\cap S^\xi_j} 4|\frac{\partial}{\partial z_j}\chi|^2\, dV\right)^{1/2}\left(\int_{U_p\cap S^\xi_j} n^2\chi^2 |\frac{\partial}{\partial \bar{z}_j} f|^2\, dV\right)^{1/2}\geq\int_{U_p\cap S^\xi_j} n^2\chi^2|\frac{\partial}{\partial \bar{z}_j} f|^2\, dV.\]
	
	Since $\xi$ is arbitrary, we obtain that \[\int_{U_p} 4|\frac{\partial}{\partial z_j}\chi|^2\, dV\geq\int_{U_p} n^2\chi^2|\frac{\partial}{\partial \bar{z}_j} f|^2\, dV\geq\int_{W_p} n^2|\frac{\partial}{\partial z_j} f|^2\, dV.\] We define $C(U_p)=\int_{U_p} 4|\frac{\partial}{\partial z_j}\chi|^2\, dV$ which only depends on $U_p$. We obtain that 
	\[\int_{W_p} |\frac{\partial}{\partial \bar{z}_j} f|^2\, dV\leq\frac{C(U_p)}{n^2}.\]
	
\end{proof}

\begin{theorem} \label{preco2}
	Let $\Omega$ be a bounded pseudoconvex domain with smooth boundary in $\mathbb{C}^n$. Let $\Sigma$ be the set of points with degenerate Levi-forms. Assume $\Sigma$ is a complex submanifold with boundary of dimension $m$ in $\partial\Omega$ (with induced complex structure and metric) and the Levi-form $\Hessian_\delta(L, L)\vert_{p\in\Sigma}=0$ if and only if $L_p\in T^{(1,0)}\Sigma$. Suppose the Diederich--Forn\ae ss index is 1 and $L$ is a $(1,0)$-tangent vector field of $\Sigma$. Then for any compact $\overline{U}\subset\mathring{\Sigma}$, there exists a family $\lbrace\psi_n\rbrace_{n=1}^\infty$ so that
\[\int_{\overline{U}} \left|\frac{1}{2}\overline{L} \psi_n+\Hessian_\delta(N, L)\right|\, dV\rightarrow 0,\] as $n\rightarrow\infty$. Moreover, if $\lbrace\psi_n\rbrace_{n=1}^\infty$ defines a distribution $\psi$ on $\mathring{\Sigma}$, then $\overline{L} \psi=-2\Hessian_\delta(N, L)$ in the distributional sense. Conversely, if there is a smooth function $\psi$ defined on a neighborhood of $\Sigma$, so that $\overline{L} \psi=-2\Hessian_\delta(N, L)$ in $\Sigma$, then the Diederich--Forn\ae ss index is 1.
\end{theorem}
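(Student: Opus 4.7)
The plan is to derive both directions from Theorem \ref{pre}: the converse by the algebraic cancellation already exploited in the proof of Theorem \ref{2dim}, and the forward direction by combining Theorem \ref{pre} with the local primitive of Lemma \ref{lem2} and the Caccioppoli-type estimate of Lemma \ref{prep}.

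For the converse, assume $\psi$ is smooth on a neighborhood of $\Sigma$ and satisfies $\overline{L}\psi = -2\Hessian_\delta(N,L)$ on $\Sigma$ for every $(1,0)$-tangent $L$ of $\Sigma$. Set $\phi := -\psi/2$, so $\overline{L}\phi = \Hessian_\delta(N,L)$. The final computation in the proof of Lemma \ref{lem2} then yields $g(\nabla_L\nabla_N\nabla\delta, L) = \Hessian_\phi(L,L) = -\tfrac12\Hessian_\psi(L,L)$ on $\Sigma$. Substituting into the inequality of Theorem \ref{pre}, for every $\eta\in(0,1)$ the squared term vanishes identically (the two contributions cancel) and the bracket reduces to $\tfrac12(-\Hessian_\phi(L,L)+\Hessian_\phi(L,L)) = 0$, so the inequality holds (with equality), and the Diederich--Forn\ae ss index equals $1$.

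For the forward direction, Theorem \ref{pre} furnishes, for every $\eta\in(0,1)$, a function $\psi_\eta$ on a neighborhood of $\Sigma$ such that on $\Sigma$ (note $\Sigma\subset\Sigma_L$ because the Levi-form vanishes on $T^{(1,0)}\Sigma$ by hypothesis),
\[
\frac{\eta}{1-\eta}\Bigl|\tfrac12\overline{L}\psi_\eta + \Hessian_\delta(N, L)\Bigr|^2 + \tfrac14\Hessian_{\psi_\eta}(L,L) + \tfrac12 g(\nabla_L\nabla_N\nabla\delta, L) \leq 0.
\]
Fix a coordinate chart $U_p\subset\Sigma$ with coordinates $\{z_j\}_{j=1}^m$, let $\phi_p$ be the local primitive supplied by Lemma \ref{lem2}, and set $F_\eta := \psi_\eta + 2\phi_p$. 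For $L = \tfrac{\partial}{\partial z_j}$, the identities $\overline{L}\phi_p = \Hessian_\delta(N,L)$ and $\Hessian_{\phi_p}(L,L) = g(\nabla_L\nabla_N\nabla\delta,L)$ make the cross terms cancel and reduce the displayed inequality to
\[
\frac{\eta}{1-\eta}\Bigl|\tfrac{\partial}{\partial\bar z_j}F_\eta\Bigr|^2 + \Hessian_{F_\eta}\!\left(\tfrac{\partial}{\partial z_j},\tfrac{\partial}{\partial z_j}\right) \leq 0.
\]
This is precisely the hypothesis of Lemma \ref{prep} with $n = \eta/(1-\eta)$ (the proof of that lemma only uses $n>0$), producing $W_p$ with $\overline{W_p}\subset U_p$ and $C(U_p)>0$ such that $\int_{W_p}\bigl|\tfrac12\tfrac{\partial}{\partial\bar z_j}\psi_\eta + \Hessian_\delta(N,\tfrac{\partial}{\partial z_j})\bigr|^2 dV \leq C(U_p)(1-\eta)^2/(4\eta^2)$.

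To globalize, I would cover the compact $\overline{U}\subset\mathring{\Sigma}$ by finitely many $W_{p_1},\dots,W_{p_K}$ and, for arbitrary $L=\sum_j\kappa_j\tfrac{\partial}{\partial z_j}$, use the conjugate-linearity of $g$ in its second slot (as in Lemma \ref{lem2}) to write
\[
\tfrac12\overline{L}\psi_\eta + \Hessian_\delta(N,L) = \sum_{j=1}^m \bar\kappa_j\Bigl(\tfrac12\tfrac{\partial}{\partial\bar z_j}\psi_\eta + \Hessian_\delta(N,\tfrac{\partial}{\partial z_j})\Bigr).
\]
Boundedness of the $\kappa_j$ on $\overline U$, Cauchy--Schwarz on each $W_{p_k}$, and summing then yield $\int_{\overline U}\bigl|\tfrac12\overline{L}\psi_\eta + \Hessian_\delta(N,L)\bigr|\,dV \leq C(\overline U)(1-\eta)$. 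Taking $\eta_n\nearrow 1$ and $\psi_n := \psi_{\eta_n}$ delivers the $L^1$-convergence. If $\{\psi_n\}$ also defines a distribution $\psi$ on $\mathring{\Sigma}$, then by the remark following the distribution definition $\overline{L}\psi_n \to \overline{L}\psi$ in $\mathcal{D}'(\mathring{\Sigma})$, while the $L^1_{\mathrm{loc}}$ convergence of $\overline{L}\psi_n + 2\Hessian_\delta(N,L)$ to $0$ implies the same in $\mathcal{D}'(\mathring{\Sigma})$; hence $\overline{L}\psi = -2\Hessian_\delta(N,L)$ distributionally. The main subtlety I expect is the chart-dependence of $\phi_p$: primitives on overlapping charts differ by locally pluriharmonic-type corrections, but since $\tfrac12\overline{L}\psi_n + \Hessian_\delta(N,L)$ is globally defined and we need only an $L^1$ bound on a compact set after passing to a finite cover, these transition ambiguities do not interfere with the conclusion.
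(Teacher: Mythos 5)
Your proposal is correct and follows essentially the same route as the paper's proof: both directions hinge on Theorem \ref{pre} together with the local primitive $\phi$ from Lemma \ref{lem2} (which lets the cross terms cancel so that the combination $\psi/2+\phi$ satisfies the Caccioppoli-type hypothesis of Lemma \ref{prep}), and the forward direction is then globalized by a finite cover of $\overline U$. The only differences are cosmetic normalizations (your $F_\eta=\psi_\eta+2\phi_p$ vs.\ the paper's $f_n=\psi/2+\phi$, hence $n=\eta/(1-\eta)$ vs.\ $2\eta/(1-\eta)$) and that you spell out the $\sum_j\bar\kappa_j$ reduction for general $L$ and the chart-independence of $|\tfrac12\overline L\psi_\eta+\Hessian_\delta(N,L)|$ a bit more explicitly than the paper does.
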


\begin{proof}
	Suppose the Diederich--Forn\ae ss index for $\Omega$ is 1. Then the Diederich--Forn\ae ss index for $\partial\Omega$ is 1. Fix $p\in\mathring{\Sigma}$. Let $U_p$ be a coordinate chart with local coordinates $\lbrace z_j\rbrace_{j=1}^{m}$. Thus, by Lemma \ref{lem2}, we obtain that \[\Hessian_\delta(N, \frac{\partial}{\partial z_j})=\frac{\partial}{\partial \bar{z}_j}\phi\quad\text{and} \quad g(\nabla_{\frac{\partial}{\partial z_j}}\nabla_N\nabla\delta, \frac{\partial}{\partial z_j})=\Hessian_\phi(\frac{\partial}{\partial z_j}, \frac{\partial}{\partial z_j})\] in $U_p$ for all $1\leq j\leq m$. In $U_p$, by Theorem \ref{pre}, we know that for any $\eta\in(0,1)$, there exists a $\psi$ defined in a neighborhood of $\Sigma$ so that\[
	\left(\frac{1}{1-\eta}-1\right)\left|\frac{\partial}{\partial \bar{z}_j} \left(\frac{\psi}{2}+\phi\right)\right|^2+\frac{1}{2}\Hessian_{\frac{\psi}{2}+\phi}\left(\frac{\partial}{\partial z_j}, \frac{\partial}{\partial z_j}\right)\leq 0,
	\] for all $1\leq j\leq m$.
	
	We let $n=2(\frac{1}{1-\eta}-1)$, and define $f_n$ to be $\frac{\psi}{2}+\phi$. Hence, we obtain \[
	n\left|\frac{\partial}{\partial\bar{z}_j} f_n\right|^2+\Hessian_{f_n}\left(\frac{\partial}{\partial z_j}, \frac{\partial}{\partial z_j}\right)\leq0.\] Remember that $f_n$ is only defined in $U_p$. By the preceding lemma, there exists $W_p$ so that  $W_p\subset\overline{W_p}\subset U_p$, and \[\int_{W_p} |\frac{\partial}{\partial\bar{z}_j} f_n|^2\, dV<\frac{C(U_p)}{n^2},\] where $C(U_p)$ is a constant only depending on $U_p$. Let $\eta\rightarrow 1$, and then $n\rightarrow\infty$. This means \[\int_{W_p} |\frac{\partial}{\partial\bar{z}_j}  f_n|^2\, dV\rightarrow 0,\] which implies \[\int_{W_p} |\frac{\partial}{\partial\bar{z}_j}  f_n|\, dV\rightarrow 0,\] for all $j$ because $\mathring{\Sigma}$ is of finite measure.

	Let $\overline{U}$ be covered by $W_p$ finitely. We have that
	\[\int_{\overline{U}} |\frac{1}{2}\overline{L}\psi_n+\Hessian_\delta(N, L)|\, dV=\sum\int_{W_p} |\frac{1}{2}\overline{L}\psi_n+\overline{L}\phi|\, dV\leq \sum\int_{W_p} |\frac{\partial}{\partial\bar{z}_j}  f_n|\, dV\rightarrow 0.\]
	
	Suppose $\lbrace\psi_n\rbrace_{n=1}^\infty$ defines a distribution $\psi$. Let $\omega\in\mathcal{D}'(\mathring{\Sigma})$ be a test function with $\support(\omega)\subset \cup_pW_p$. Since $\support(\omega)$ is compact, then we can choose finite many $W_p$. On each $W_p$, we have that
	\[\int_{W_p} |\frac{1}{2}\overline{L}\psi_n+\Hessian_\delta(N, L)|\, dV\rightarrow 0.\] By a partition of unity, we have that \[\int_{\mathring{\Sigma}}|\frac{1}{2}\overline{L}\psi_n+\Hessian_\delta(N, L)||\omega|\,dV\rightarrow 0\] which means \[-\int_{\mathring{\Sigma}} \psi_n\cdot L\omega\, dV\rightarrow \int_{\mathring{\Sigma}} -2\Hessian_\delta(N, L)\omega\, dV.\] On the other hand, \[-\int_{\mathring{\Sigma}} \psi_n\cdot L\omega\, dV\rightarrow <\overline{L}\psi, \omega>.\] As a result, \[\overline{L}\psi=-2\Hessian_\delta(N, L)\]  in $\mathring{\Sigma}$ in the distributional sense.
	
	Finally the last statement follows from Lemma \ref{lem2}
\end{proof}

\begin{remark}\label{nolim}
	In general $\psi_n$ might not define a distribution. For example, suppose that $\overline{L}\phi=\Hessian_\delta(N, L)$. We let $\psi_n=-2\phi+n$ and observe that \[n\left|\frac{1}{2}\overline{L}\psi_n+g(\nabla_N\nabla\delta, L)\right|^2+\frac{1}{4}\Hessian_{\psi_n}(L, L)+\frac{1}{2}g(\nabla_L\nabla_N\nabla\delta, L)=0.\] Those $\psi_n$ satisfy the condition of the Diederich--Forn\ae ss index is 1 but they diverge to $\infty$ everywhere. Thus, we need to assume that $\lbrace\psi_n\rbrace_{n=1}^\infty$ defines a distribution in the preceding theorem.
\end{remark}

\begin{corollary}\label{preco3}	
	Let $\Omega$ be a bounded pseudoconvex domain with smooth boundary in $\mathbb{C}^n$. Let $\Sigma$ be the set of points with degenerate Levi-forms. Assume $\Sigma$ is a Levi-foliation, i.e., a  $2m+1$-dimensional (real) submanifold with boundary in $\partial\Omega$ foliated by complex submanifolds with boundary of complex dimension $m$ in $\partial\Omega$ (with induced complex structure and metric). Assume the Levi-form $\Hessian_\delta(L, L)\vert_{p\in\Sigma}=0$ if and only if $L_p\in T^{(1,0)}\Sigma_p$, where $\Sigma_p$ is the leaf of $p$. Suppose the Diederich--Forn\ae ss index is 1. Let $L$ be a $(1,0)$-tangent complex vector field of $\Sigma$. Then for any compact subset $\overline{U}\subset\mathring{\Sigma}$, there exists a sequence of smooth functions $\psi_n$ such that \[\int_{\overline{U}} \left|\frac{1}{2}\overline{L} \psi_n+\Hessian_\delta(N, L)\right|\, dV\rightarrow 0,\] as $n\rightarrow\infty$. Here the $dV$ is the leafwise volume form. Moreover, if $\lbrace\psi_n\rbrace_{n=1}^\infty$ defines a distribution $\psi$ on $\mathring{\Sigma}$, then $\overline{L} \psi=-2\Hessian_\delta(N, L)$ in the distributional sense. Conversely, if there is a smooth function $\psi$ defined on a neighborhood of $\Sigma$, so that $\overline{L} \psi=-2\Hessian_\delta(N, L)$ in $\Sigma$, then the Diederich--Forn\ae ss index is 1.
\end{corollary}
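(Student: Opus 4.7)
The plan is to mirror the proof of Theorem \ref{preco2} but to carry out every step leafwise, since by hypothesis the Levi-form degenerates exactly on the leaf-direction $T^{(1,0)}\Sigma_p$. In particular, on each individual leaf $\Sigma_p$, all the hypotheses of Lemma \ref{lem1} and Lemma \ref{lem2} are satisfied, because those lemmas are local and only use that the Levi-form vanishes on the ambient $T^{(1,0)}$-directions of the relevant complex submanifold.

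First I would fix $p\in\mathring{\Sigma}$ and choose a foliation chart $U_p$ equipped with coordinates $(z_1,\dots,z_m,t)$, where $\lbrace z_j\rbrace$ are holomorphic leafwise coordinates and $t$ parametrises the transverse (real) direction, so that each leaf-slice $\Sigma_t:=U_p\cap\lbrace t=\text{const}\rbrace$ is a complex submanifold of dimension $m$. On each leaf, Lemma \ref{lem2} produces a real function $\phi_t$ on $\Sigma_t\cap U_p$ with
\[
\Hessian_\delta\bigl(N,\tfrac{\partial}{\partial z_j}\bigr)=\frac{\partial\phi_t}{\partial\bar{z}_j},\qquad g(\nabla_L\nabla_N\nabla\delta,L)=\Hessian_{\phi_t}(L,L)
\]
for every $L\in T^{(1,0)}\Sigma_t$. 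Since the Diederich--Forn\ae ss index is $1$, Theorem \ref{pre} furnishes, for every $\eta\in(0,1)$, a smooth function $\psi$ on a neighborhood of $\Sigma$ in $\partial\Omega$ such that setting $f_\eta:=\psi/2+\phi_t$ and $n:=2(\frac{1}{1-\eta}-1)$ we obtain, leafwise on $\Sigma_t\cap U_p$,
\[
n\,\bigl|\tfrac{\partial f_\eta}{\partial\bar{z}_j}\bigr|^2+\Hessian_{f_\eta}\bigl(\tfrac{\partial}{\partial z_j},\tfrac{\partial}{\partial z_j}\bigr)\leq 0,\qquad 1\leq j\leq m.
\]

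Next I would apply Lemma \ref{prep} inside each leaf-slice. The crucial point here, and the main obstacle, is to choose the cutoff function $\chi$ once in the ambient chart $U_p$ so that the resulting constant $C(U_p)$ from the Caccioppoli-type inequality is uniform in the transverse parameter $t$; this is possible because $\chi$ can be taken to be a function of $(z_1,\dots,z_m)$ alone, yielding a leafwise bound
\[
\int_{W_p\cap\Sigma_t}\bigl|\tfrac{\partial f_\eta}{\partial\bar{z}_j}\bigr|^2\,dV_t\leq \frac{C(U_p)}{n^2}
\]
with the same constant for every leaf meeting $W_p$. Using Cauchy--Schwarz on each leaf and then integrating (via Fubini with the leafwise volume form $dV$) against the transverse parameter on a compact set, followed by a finite cover of $\overline{U}$ by such $W_p$, one concludes
\[
\int_{\overline{U}}\bigl|\tfrac{1}{2}\overline{L}\psi_n+\Hessian_\delta(N,L)\bigr|\,dV\longrightarrow 0
\]
as $\eta\to 1$, where $\psi_n$ denotes the functions $\psi$ obtained above.

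The distributional statement is then identical to that in Theorem \ref{preco2}: pair against a test function $\omega\in C_c^\infty(\mathring{\Sigma})$ supported in a finite union of the $W_p$, use a partition of unity, and pass to the limit, obtaining $\overline{L}\psi=-2\Hessian_\delta(N,L)$ in $\mathcal{D}'(\mathring{\Sigma})$. Finally, the converse direction is immediate from Lemma \ref{lem2} applied leafwise, combined with Theorem \ref{pre}, exactly as in the last line of the proof of Theorem \ref{preco2}.
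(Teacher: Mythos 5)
Your proposal is correct and is precisely the leafwise version of the proof of Theorem \ref{preco2}, which is what the paper intends (it states the result as a corollary without writing out details). You also rightly isolate the only new technical point — choosing the cutoff $\chi$ in Lemma \ref{prep} to depend only on the leaf coordinates $(z_1,\dots,z_m)$ so that the Caccioppoli constant $C(U_p)$ is uniform in the transverse parameter $t$ — which is exactly what makes the Fubini step over the leafwise volume form go through.
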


	\section{The cohomology related to the index}\label{sec4}
	
	Suppose $\Sigma$ is a complex manifold with boundary or is a Levi-foliation. We define a 1-from on $\Sigma$. 
	\begin{definition}
		Let $\Sigma$ be a complex manifold with boundary of dimension $m$. Assume the Levi-form $\Hessian_\delta(L, L)\vert_{p\in\Sigma}=0$ if and only if $L_p\in T^{(1,0)}\Sigma_p$, where $\Sigma_p$ is the leaf of $p$. Let $p\in\Sigma$ and $U_p$ be a coordinate chart containing $p$. We define the \textit{1-form $\theta$} as what follows: \[\begin{split}
		\theta:=&\sum_{j=1}^{m}\left(\Re\Hessian_\delta(N, \frac{\partial}{\partial z_j})\right)dx_j+\left(\Im\Hessian_\delta(N, \frac{\partial}{\partial z_j})\right)dy_j\\=&\sum_{j=1}^{m}\Re\left(\Hessian_\delta(N, \frac{\partial}{\partial z_j})d\bar{z}_j\right)
		\end{split}\]
	\end{definition}

	We observe that $\theta$ is independent of coordinates. Moreover, by Lemma \ref{lem1} and \ref{lem2}, we know by calculating the local representation of $\theta$ that $d\theta=0$. In other words, $\theta$ is a closed 1-form on $\Sigma$. Once $\theta$ is exact, that means that there will be a real function (0-form) $\phi$ defined on $\Sigma$ so that $\frac{\partial}{\partial \bar{z}_j}\phi=\Hessian_\delta(N, \frac{\partial}{\partial z_j})$ on $\Sigma$. This motivates us to reformulate or extend Theorem \ref{2dim} and \ref{3dim} with the cohomology language. However, we need to require that $\phi$ is defined in a neighborhood of $\Sigma$ to fit in Theorem \ref{pre}. For this, we modify the de Rham cohomology as what follows. 
	
	Let $C^\infty(\Sigma, \Lambda^k)$ denote the $k$-forms which are smooth up to the boundary of $\Sigma$. We define,
	\[\mathfrak{C}^\infty(\Sigma, \Lambda^k):=\lbrace\theta\in C^\infty(\Sigma, \Lambda^k): \theta \text{  can be extended to a neighborhood of $\Sigma$} \rbrace.\] We denote the two forms $\theta_1=\theta_2$ if $\theta_1=\theta_2$ on $\Sigma$, even if they have different extension beyond $\Sigma$. It is clear that $\mathfrak{C}^\infty(\Sigma, \Lambda^k)$ is a subspace of $C^\infty(\Sigma, \Lambda^k)$. We say a $k$-form $\theta$ is closed if $d\theta=0$ on $\Sigma$. We also define our modified de Rham cohomology. \[\mathcal{H}^k(\Sigma):=\lbrace \theta\in\mathfrak{C}^\infty(\Sigma, \Lambda^k): d\theta=0 \rbrace\slash d\mathfrak{C}^\infty(\Sigma, \Lambda^{k-1}).\] By the similar argument of Theorem \ref{preco2}, we obtain the following theorem.
	
		\begin{theorem}\label{co2}
		Let $\Omega$ be a bounded pseudoconvex domain with smooth boundary in $\mathbb{C}^n$. Let $\Sigma$ be the set of points with degenerate Levi-forms. Assume $\Sigma$ is a complex manifold with boundary of dimension $m$ (with induced complex structure and metric) and the Levi-form $\Hessian_\delta(L, L)\vert_{p\in\Sigma}=0$ if and only if $L_p\in T^{(1,0)}\Sigma$. Let $\theta$ be defined above and $[\theta]=[0]$ in $\mathcal{H}^k(\Sigma)$. Then the Diederich--Forn\ae ss index of $\Omega$ is 1.
	\end{theorem}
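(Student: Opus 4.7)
The plan is to extract from $[\theta]=[0]$ a real smooth potential $\phi$ for $\theta$ that already extends to a neighborhood of $\Sigma$ in $\partial\Omega$, and then feed $\psi:=-2\phi$ into Theorem \ref{pre}. The whole point of the modified cohomology $\mathcal{H}^k$ is to build the Whitney-type extension of the primitive into the definition, so the final step in the proof of Theorem \ref{2dim} becomes automatic here.

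First, by the definition of $\mathcal{H}^1(\Sigma)$ (the relevant $k$ being $1$, since $\theta$ is a $1$-form), the hypothesis yields $\phi\in\mathfrak{C}^\infty(\Sigma,\Lambda^0)$ with $d\phi=\theta$ on $\Sigma$; in particular $\phi$ extends smoothly to a neighborhood of $\Sigma$ in $\partial\Omega$. Since $\theta$ is a real $1$-form, we may take $\phi$ real. In any local holomorphic chart $U_\alpha\subset\Sigma$ with coordinates $\{z_j=x_j+\i y_j\}_{j=1}^m$, comparing the $dx_j$ and $dy_j$ components of $d\phi=\theta$ recovers exactly the system whose local solvability was established in Lemma \ref{lem2}. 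Hence, with the normalization of that lemma, $\partial\phi/\partial\bar z_j=\Hessian_\delta(N,\partial/\partial z_j)$ on $\Sigma$, so $\overline{L}\phi=\Hessian_\delta(N,L)$ for every smooth $(1,0)$-tangent field $L$ on $\Sigma$, and by the second half of Lemma \ref{lem2} also $g(\nabla_L\nabla_N\nabla\delta,L)=\Hessian_\phi(L,L)$ on $\Sigma$.

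Setting $\psi:=-2\phi$ (automatically smooth on the same neighborhood of $\Sigma$), both terms in the Diederich--Forn\ae ss inequality of Theorem \ref{pre} vanish identically on $\Sigma$:
\[\tfrac{1}{2}\overline{L}\psi+\Hessian_\delta(N,L)=-\overline{L}\phi+\Hessian_\delta(N,L)=0,\]
\[\tfrac{1}{2}\Hessian_\psi(L,L)+g(\nabla_L\nabla_N\nabla\delta,L)=-\Hessian_\phi(L,L)+\Hessian_\phi(L,L)=0.\]
Since the Levi-form hypothesis in the statement forces $\Sigma_L=\Sigma$ for every smooth $(1,0)$-tangent field $L$ on $\Sigma$, the inequality of Theorem \ref{pre} holds as equality for every $\eta\in(0,1)$. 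Theorem \ref{pre} then yields that the Diederich--Forn\ae ss index of $\Omega$ equals $1$.

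No serious obstacle is expected: the analytic heavy lifting is already packaged inside Theorem \ref{pre} and Lemma \ref{lem2}, while the topology of $\Sigma$ is absorbed entirely into the hypothesis $[\theta]=[0]$, which is exactly the design principle behind $\mathcal{H}^k$. The only minor book-keeping item is the real-valuedness of $\phi$, which is forced by $\theta$ being a real $1$-form and may be arranged by replacing $\phi$ by $\Re\phi$ if needed; the extension property is preserved.
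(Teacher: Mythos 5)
Your proposal is correct and takes essentially the same route the paper intends: the paper dispatches Theorem \ref{co2} with the single remark ``By the similar argument of Theorem \ref{preco2}'' (pointing to the converse direction of that theorem, which in turn rests on Lemma \ref{lem2} and Theorem \ref{pre}), and your write-up simply makes that argument explicit by extracting from $[\theta]=[0]$ a primitive $\phi\in\mathfrak{C}^\infty(\Sigma,\Lambda^0)$ that already extends, identifying $\overline{L}\phi$ with $\Hessian_\delta(N,L)$ via Lemma \ref{lem2}, and feeding $\psi=-2\phi$ into Theorem \ref{pre}. The only nit is the claim that ``$\Sigma_L=\Sigma$'': in general $\Sigma_L$ may contain strongly pseudoconvex points where $L_p=0$, at which the inequality of Theorem \ref{pre} is trivially $0\le 0$; this does not affect the conclusion.
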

		In the theory of foliations, the leafwise de Rham cohomology is a well-known tool used to study the topology of leaves. The reader can read \citep{CC00} for the detail. Roughly speaking, in the leafwise deRham cohomology, the exterior derivative in (regular) de Rham cohomology is replaced by leafwise exterior derivative. Moreover, the smooth $k$-forms are replaced by the $k$-forms along leaves $\mathcal{A}^\infty(\Sigma, \Lambda^k)$.
	
	We modify the leaf de Rham cohomology as what follows. Let \[\mathfrak{A}^\infty(\Sigma, \Lambda^k):=\lbrace\theta\in \mathcal{A}^\infty(\Sigma, \Lambda^k): \theta \text{  can be extended to a neighborhood of $\Sigma$} \rbrace.\] We can see that $\mathfrak{A}^\infty(\Sigma, \Lambda^k)$ is a subspace of $\mathcal{A}^\infty(\Sigma, \Lambda^k)$.
	
	We also define the relative de Rham cohomology \[\mathfrak{H}^k(\Sigma):=\lbrace \theta\in\mathfrak{A}^\infty(\Sigma, \Lambda^k): d\theta=0 \rbrace\slash d\mathfrak{A}^\infty(\Sigma, \Lambda^{k-1}).\] Here $d$ should be understood as the exterior derivative along leaves.
	
	By Corollary \ref{preco3}, we obtain the following theorem.
	
	\begin{theorem}\label{co3}
		
			Let $\Omega$ be a bounded pseudoconvex domain with smooth boundary in $\mathbb{C}^n$. Let $\Sigma$ be the set of points with degenerate Levi-forms. Assume $\Sigma$ is a Levi-foliation, i.e., an open ($2m+1$-dimensional) real submanifold (with boundary) of $\partial\Omega$ foliated by complex manifold (with boundary) of complex dimension $m$ (with induced complex structure and metric) and the Levi-form $\Hessian_\delta(L, L)\vert_{p\in\Sigma}=0$ if and only if $L_p\in T^{(1,0)}\Sigma_p$, where $\Sigma_p$ is the leaf of $p$.
			Let $\theta$ be defined above for all leaves and $[\theta]=[0]$ and $[\theta]=[0]$ in $\mathfrak{H}^k(\Sigma)$. Then the Diederich--Forn\ae ss index of $\Omega$ is 1.
	\end{theorem}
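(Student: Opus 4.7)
The plan is to model the argument on the proof of Theorem \ref{2dim} but leverage the leafwise relative cohomology to produce the primitive $\phi$ globally, so that the Whitney--extension step of Theorem \ref{2dim} is bypassed entirely by the built-in membership $\phi\in\mathfrak{A}^\infty(\Sigma,\Lambda^0)$. The only genuinely new ingredient over that earlier proof is the transition from a single complex submanifold to a foliation by complex submanifolds, and the hypothesis $[\theta]=[0]$ in $\mathfrak{H}^k(\Sigma)$ is tailored precisely to make this transition painless.

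Concretely, I would first use $[\theta]=[0]$ to produce $\phi\in\mathfrak{A}^\infty(\Sigma,\Lambda^0)$ with $d\phi=\theta$ along leaves, where $d$ is the leafwise exterior derivative. By definition of $\mathfrak{A}^\infty(\Sigma,\Lambda^0)$, this $\phi$ extends smoothly to a neighborhood of $\Sigma$ in $\partial\Omega$, so there is nothing further to do about extension. On each leaf $\Sigma_p$, reading $d\phi=\theta$ off in local holomorphic coordinates yields $\overline{L}\phi=\Hessian_\delta(N,L)$ for every leafwise $(1,0)$-tangent vector field $L$, exactly as in the proof of Theorem \ref{2dim}. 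Applying the leafwise version of Lemma \ref{lem2} (each leaf carries the induced complex structure and K\"ahler metric from $\mathbb{C}^n$, and the hypotheses of the lemma are local on the leaf) produces the companion identity $g(\nabla_L\nabla_N\nabla\delta,L)=\Hessian_\phi(L,L)$ along every leaf.

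Next I would set $\psi:=-2\phi$ and substitute into the inequality of Theorem \ref{pre}. Since by hypothesis the Levi-null directions at $p\in\Sigma$ are exactly the elements of $T^{(1,0)}\Sigma_p$, it suffices to verify the inequality for such $L$; but for those, the two terms on the left-hand side cancel identically in the same manner as in the final display of Theorem \ref{2dim}, giving $0$ for every $\eta\in(0,1)$. Theorem \ref{pre} then forces the Diederich--Forn\ae ss index of $\Omega$ to equal $1$.

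The principal obstacle I foresee is a conceptual rather than computational one, namely checking that Lemmas \ref{lem1} and \ref{lem2} genuinely transport from the complex submanifold setting to the Levi-foliation setting. Inspection of those proofs shows that the computations only invoke (i) the vanishing $\Hessian_\delta(X,X)=0$ for $X\in T^{(1,0)}$ of the Levi-null locus, (ii) the commutator relations $[\partial/\partial z_i,\partial/\partial\bar z_j]=0$ and $\nabla_{\partial/\partial\bar z_i}\partial/\partial z_j=0$ in flat $\mathbb{C}^n$, and (iii) the reduction of the $(N-\overline{N})$-derivative of $g(\nabla_X\nabla\delta,X)$ via pseudoconvexity at the boundary minimum. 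All three survive intact in the foliation setting under the standing hypothesis that $\Hessian_\delta(L,L)=0$ iff $L\in T^{(1,0)}\Sigma_p$, so the leafwise analogues of Lemmas \ref{lem1} and \ref{lem2} hold with essentially the same proofs, and the argument above then runs in parallel with Theorem \ref{2dim} as promised.
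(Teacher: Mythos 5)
Your proposal is correct and takes essentially the same route as the paper: the paper simply cites the converse direction of Corollary \ref{preco3}, which is exactly the chain you unpack (exactness of $\theta$ in $\mathfrak{H}^1(\Sigma)$ gives $\phi\in\mathfrak{A}^\infty(\Sigma,\Lambda^0)$ already extended to a neighborhood, leafwise Lemma \ref{lem2} supplies $g(\nabla_L\nabla_N\nabla\delta,L)=\Hessian_\phi(L,L)$, and $\psi=-2\phi$ makes both summands in Theorem \ref{pre} vanish identically). Your added paragraph checking that Lemmas \ref{lem1}--\ref{lem2} transport leafwise is the right thing to verify and is indeed what Corollary \ref{preco3} rests on implicitly.
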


\section{Extension to smooth manifolds}\label{sec5}
In this section, we extend the theorems in Section \ref{sec4}. We only assume that $\Sigma$ is a smooth manifold. We denote $N_\delta-\overline{N}_\delta$ by $\nu$. It is not hard to see that $J(\nabla\delta)=\i\nu$,  where $J$ is the complex structure in $\mathbb{C}^n$. Since $\Sigma$ will not be assumed to be a complex manifold with boundary, we in general do not have coordinates $\lbrace z_j\rbrace_{j=1}^m$. For the following, we assume $x_j$ for some $j$ is one of local coordinates in a chart $U_\alpha$. We also define $\frac{\partial}{\partial y_j}$ so that \[J(\frac{\partial}{\partial x_j})=\frac{\partial}{\partial y_j}\qquad\text{and}\qquad J(\frac{\partial}{\partial y_j})=-\frac{\partial}{\partial x_j}.\] Consequently, $\frac{\partial}{\partial z_j}=\frac{1}{2}(\frac{\partial}{\partial x_j}-\i\frac{\partial}{\partial y_j})$ is defined. We remind the reader that $\frac{\partial}{\partial y_j}$ might not tangent to $\Sigma$.

\begin{lemma}
	Let $\Omega$ be a bounded pseudoconvex domain with smooth boundary in $\mathbb{C}^n$. Let $\Sigma$ be the set of points with degenerate Levi-forms. Assume $\Sigma$ is a smooth submanifold with boundary of dimension $m$ in $\partial\Omega$ (with induced metric). Suppose that the Levi-form $\Hessian_\delta(\frac{\partial}{\partial z_j}, \frac{\partial}{\partial z_j})\vert_{p\in\Sigma}=0$, where $x_j$ is one of local coordinates in chart $U_\alpha$ of $\Sigma$.  We denote $N_\delta$ with $N$. Then we have the following identities:
	\[\Re \left(g(\nabla_N\nabla\delta, \frac{\partial}{\partial z_j})\right)=\frac{1}{4}g(\nabla_\nu\nu, \frac{\partial}{\partial x_j})\qquad\text{and}\qquad \Im\left(g(\nabla_N\nabla\delta, \frac{\partial}{\partial y_j})\right)=\frac{1}{4}g(\nabla_\nu\nu, \frac{\partial}{\partial y_j}).\]
	Moreover, the following holds:
	\[\begin{split}
	\frac{\partial}{\partial z_j} g(\nabla_N\nabla\delta, \frac{\partial}{\partial z_j})=&\frac{1}{8}\left(\frac{\partial}{\partial x_j}g(\nabla_\nu\nu, \frac{\partial}{\partial x_j})+\frac{\partial}{\partial y_j}g(\nabla_\nu\nu, \frac{\partial}{\partial y_j})\right)\\=&\frac{1}{8}\left(g(\nabla_{\frac{\partial}{\partial x_j}}\nabla_\nu\nu, \frac{\partial}{\partial x_j})+g(\nabla_{\frac{\partial}{\partial y_j}}\nabla_\nu\nu, \frac{\partial}{\partial y_j})\right).
	\end{split}
	\]
\end{lemma}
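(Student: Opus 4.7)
The plan is to compute everything in flat Euclidean coordinates, using that the Levi--Civita connection on $\mathbb{C}^n$ is trivial so $\nabla_X Y$ is just termwise differentiation in the constant $(1,0)$ and $(0,1)$ frames.

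I would first record the collapsing identity $\nabla_{\nabla\delta}\nabla\delta = 0$, which follows from $|\nabla\delta|^2 \equiv 1$ and symmetry of the Hessian: differentiating the norm gives $g(\nabla_Y\nabla\delta, \nabla\delta) = 0$ for every $Y$, and by symmetry also $g(\nabla_{\nabla\delta}\nabla\delta, Y) = 0$. Combined with $N = \tfrac{1}{2}(\nabla\delta + \nu)$ this forces $Nf = \tfrac{1}{2}\nu(f)$ for any real function $f$ with $\nabla\delta(f) = 0$. Applied to $f = \delta_{x_j}$ and $f = \delta_{y_j}$---both of which satisfy $\nabla\delta(f) = \tfrac{1}{2}\partial|\nabla\delta|^2 = 0$---this gives $N\delta_{x_j} = \tfrac{1}{2}\nu\delta_{x_j}$ and $N\delta_{y_j} = \tfrac{1}{2}\nu\delta_{y_j}$.

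For the first two identities I would expand $g(\nabla_N\nabla\delta, \partial/\partial z_j)$ using the flat connection; it collapses to $N$ applied to a single first derivative of $\delta$. Decomposing via $\delta_{\bar j} = \tfrac{1}{2}(\delta_{x_j} + i\delta_{y_j})$ and the two collapses above rewrites both its real and imaginary parts as definite complex combinations of $\nu(\delta_{x_j})$ and $\nu(\delta_{y_j})$. A parallel calculation of $g(\nabla_\nu\nu, \partial/\partial x_j)$ and $g(\nabla_\nu\nu, \partial/\partial y_j)$, using the explicit form $\nu = 2\sum_k \delta_{\bar k}\partial/\partial z_k - 2\sum_k \delta_k \partial/\partial\bar z_k$ together with the trivial connection, produces exactly the same combinations, which yields the two equalities.

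For the third identity I would combine identities~1 and 2 into a single complex formula for $g(\nabla_N\nabla\delta, \partial/\partial z_j)$, then apply $\partial/\partial z_j = \tfrac{1}{2}(\partial/\partial x_j - i\partial/\partial y_j)$. The imaginary cross-terms $\partial_{x_j} g(\nabla_\nu\nu, \partial/\partial y_j) - \partial_{y_j} g(\nabla_\nu\nu, \partial/\partial x_j)$ must cancel because Lemma~\ref{lem1}, applied with $T = L = \partial/\partial z_j$, forces $\partial/\partial z_j\, g(\nabla_N\nabla\delta, \partial/\partial z_j)$ to equal its own complex conjugate $\partial/\partial\bar z_j\, g(\nabla_{\overline N}\nabla\delta, \partial/\partial\bar z_j)$ and hence to be real. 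What remains is exactly the real sum in the statement, and the final passage from $\partial_{x_j} g(\nabla_\nu\nu, \partial/\partial x_j)$ to $g(\nabla_{\partial/\partial x_j}\nabla_\nu\nu, \partial/\partial x_j)$ uses metric compatibility plus $\nabla_{\partial/\partial x_j}\partial/\partial x_j = 0$ in flat $\mathbb{C}^n$.

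The main obstacle is justifying the reuse of Lemma~\ref{lem1}: there it was stated for $T, L \in T^{(1,0)}\Sigma$ when $\Sigma$ is a complex submanifold, but $\partial/\partial z_j$ here need not be tangent to $\Sigma$. The remedy is that the only input its proof actually consumed was the vanishing of $g(\nabla_X\nabla\delta, X)$ for vectors of the form $X = L \pm T$ and $X = L \pm iT$; taking $T = L = \partial/\partial z_j$ reduces this to the single standing hypothesis $\Hessian_\delta(\partial/\partial z_j, \partial/\partial z_j) = 0$, so the argument carries through unchanged.
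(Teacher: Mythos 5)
Your approach is essentially the paper's, with a more explicitly computational packaging. For the first two identities the paper proves $2\Re\,g(\nabla_N\nabla\delta,\partial/\partial z_j)=g(\nabla_N\nu,\partial/\partial x_j)$ by playing the symmetry of the Hessian and the constancy of $g(N,N)$ off against each other, notes that this quantity is real, and uses $\bar\nu=-\nu$ to rewrite it as $\tfrac12 g(\nabla_\nu\nu,\partial/\partial x_j)$; you instead derive $\nabla_{\nabla\delta}\nabla\delta=0$ from $|\nabla\delta|\equiv1$, collapse $Nf=\tfrac12\nu f$ for $f\in\{\delta_{x_j},\delta_{y_j}\}$, and compute in flat coordinates. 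The two arguments are equivalent—both ultimately run on the eikonal identity $|\nabla\delta|\equiv1$—and yours is arguably the cleaner route. For the third identity you, like the paper, invoke Lemma~\ref{lem1} to see that $\partial/\partial z_j\,g(\nabla_N\nabla\delta,\partial/\partial z_j)$ is real, so the imaginary cross-terms cancel; this matches the paper's one-line citation of Lemma~\ref{basicnoc} and Lemma~\ref{lem1}.

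The one place where your justification is too quick is the remedy you offer for reusing Lemma~\ref{lem1}. You claim its proof ``only consumed the vanishing of $g(\nabla_X\nabla\delta,X)$ for $X=L\pm T$, $L\pm iT$,'' but that is not all it consumed. To get from $\Hessian_\delta(X,X)=0$ on $\Sigma$ to $(N-\overline N)\Hessian_\delta(X,X)=0$ on $\Sigma$, the proof of Lemma~\ref{lem1} uses pseudoconvexity: $\Hessian_\delta(X,X)\geq0$ on $\partial\Omega$, so at a point of $\Sigma$ the function attains its minimum along $\partial\Omega$ and its tangential derivative vanishes. That non-negativity is only guaranteed when $X$ is a $(1,0)$-tangent vector to $\partial\Omega$ (i.e.\ $X\delta=0$), and in the present setting $\partial/\partial z_j$ need not be tangent (exactly when $\partial/\partial y_j=J\,\partial/\partial x_j$ is transversal). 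So taking $T=L=\partial/\partial z_j$ gives you $\Hessian_\delta(\partial/\partial z_j,\partial/\partial z_j)=0$ on $\Sigma$, but not automatically $(N-\overline N)\Hessian_\delta(\partial/\partial z_j,\partial/\partial z_j)=0$. The paper itself cites Lemma~\ref{lem1} here without addressing this, so the gap is inherited rather than introduced by you, but as stated your remedy does not close it; it would need either an argument that $\partial/\partial z_j$ is in fact $(1,0)$-tangent along $\Sigma$ under the lemma's hypotheses, or a separate verification of the first-order vanishing.
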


\begin{proof}
	We know that \[\begin{split}
	2\Re\left(g(\nabla_N\nabla\delta, \frac{\partial}{\partial z_j})\right)=&g(\nabla_N\nabla\delta, \frac{\partial}{\partial z_j})+g(\nabla_{\overline{N}}\nabla\delta, \frac{\partial}{\partial\bar{z}_j})\\=&g(\nabla_N\nabla\delta, \frac{\partial}{\partial z_j})+g(\nabla_{\frac{\partial}{\partial z_j}}\nabla\delta, N)\\=&g(\nabla_N\nabla\delta, \frac{\partial}{\partial z_j})-g(N, \nabla_{\frac{\partial}{\partial\bar{z}_j}}\nabla\delta).
	\end{split}\]
	The last equality is because that $\frac{\partial}{\partial z_j}(N, N)=0$. Thus, 
	\[\begin{split}
	2\Re\left(g(\nabla_N\nabla\delta, \frac{\partial}{\partial z_j})\right)=&g(\nabla_N\nabla\delta, \frac{\partial}{\partial z_j})-g( \nabla_{N}\nabla\delta, \frac{\partial}{\partial\bar{z}_j})\\=&g(\nabla_N\nabla\delta, -\i\frac{\partial}{\partial y_j})\\=&g(\nabla_NJ(\nabla\delta), -\i J\frac{\partial}{\partial y_j})\\=&g(\nabla_N\nu, \frac{\partial}{\partial x_j}).
	\end{split}\]
	Since $g(\nabla_N\nu, \frac{\partial}{\partial x_j})$ is real, \[g(\nabla_N\nu, \frac{\partial}{\partial x_j})=g(\nabla_{\overline{N}}\overline{\nu}, \frac{\partial}{\partial x_j})=-g(\nabla_{\overline{N}}\nu, \frac{\partial}{\partial x_j}).\] Hence, we obtain that \[g(\nabla_N\nu, \frac{\partial}{\partial x_j})=\frac{1}{2}\left(g(\nabla_N\nu, \frac{\partial}{\partial x_j})-g(\nabla_{\overline{N}}\nu, \frac{\partial}{\partial x_j})\right)=\frac{1}{2}g(\nabla_\nu\nu,\frac{\partial}{\partial x_j}).\] This proves that \[\Re \left(g(\nabla_N\nabla\delta, \frac{\partial}{\partial z_j})\right)=\frac{1}{4}g(\nabla_\nu\nu, \frac{\partial}{\partial x_j}).\] Similarly, one can show that \[\Im\left(g(\nabla_N\nabla\delta, \frac{\partial}{\partial y_j})\right)=\frac{1}{4}g(\nabla_\nu\nu, \frac{\partial}{\partial y_j}).\]
	
	The last identities follows from a direct computation, Lemma \ref{basicnoc} and Lemma \ref{lem1}.
\end{proof}

The preceding lemma allows a discussion on real tangent directions. In particular, if $\frac{\partial}{\partial y_j}$ is transversal to the set of degenerate Levi-forms $\Sigma$, one can easily construct $\psi$ along the direction of $\frac{\partial}{\partial y_j}$, so that \[g(\nabla_\nu\nu, \frac{\partial}{\partial y_j})=\frac{\partial}{\partial y_j}\psi\qquad\text{and}\qquad\frac{\partial}{\partial y_j}g(\nabla_\nu\nu,\frac{\partial}{\partial y_j})=\frac{\partial}{\partial y_j}(\frac{\partial}{\partial y_j}\psi).\] This happens even if $\frac{\partial}{\partial x_j}$ is tangent to $\Sigma$. Thus, a real tangent vector transversal to $\Sigma$ is not an obstruction. By the discussion above, we obtain following theorems.

As in Section \ref{sec4}, we say $\mathcal{H}^1(\Sigma)$ is \textit{trivial} if any closed form in $\Sigma$ which can be extended to a neighborhood of $\Sigma$ is exact.
\begin{theorem}\label{smooth}
	Let $\Omega$ be a bounded pseudoconvex domain with smooth boundary in $\mathbb{C}^n$. Let $\Sigma$ be the set of points with degenerate Levi-forms. Assume $\Sigma$ is a smooth manifold with boundary of dimension $m$ (with induced metric) and the Levi-form $\Hessian_\delta(Z_p, Z_p)\vert_{p\in\Sigma}=0$ if $X_p\in T_p\Sigma$, where $Z_p=\frac{1}{2}(X_p-\i J(X_p))$. Let the first de Rham cohomology $\mathcal{H}^1(\Sigma)$ be trivial. Then the Diederich--Forn\ae ss index of $\Omega$ is 1.
\end{theorem}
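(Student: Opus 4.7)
The plan is to adapt the proof of Theorem \ref{2dim} to this real smooth setting, replacing the complex $1$-form on a complex submanifold by an intrinsic real $1$-form on $\Sigma$. Guided by the preceding lemma, I would define a $1$-form $\theta$ on $\Sigma$ by
\[
\theta(X) := \tfrac{1}{4}\, g(\nabla_\nu \nu, X), \qquad X\in T\Sigma;
\]
when $x_j$ is a local coordinate on $\Sigma$, this yields $\theta(\partial/\partial x_j) = \Re\Hessian_\delta(N, \partial/\partial z_j)$, the real part of the complex $\theta$ of Section \ref{sec4}. The first step is to verify that $\theta$ is closed on $\Sigma$. For $X, Y\in T\Sigma$, flatness of $\mathbb{C}^n$ gives
\[
d\theta(X, Y) \;=\; \tfrac{1}{4}\bigl(g(\nabla_X\nabla_\nu\nu, Y) - g(\nabla_Y\nabla_\nu\nu, X)\bigr),
\]
and a real-variable analog of Lemma \ref{lem1} — obtained by writing $\nabla_\nu = \nabla_N - \nabla_{\overline{N}}$ and using pseudoconvexity to show $(N-\overline{N})g(\nabla_X\nabla\delta, Y)=0$ on $\Sigma$ — reduces this to a symmetry of mixed covariant derivatives that holds by vanishing curvature.

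Once $\theta$ is closed, and its coefficients extend smoothly to a neighborhood of $\Sigma$ in $\partial\Omega$, triviality of $\mathcal{H}^1(\Sigma)$ delivers a real-valued $\phi\in\mathfrak{C}^\infty(\Sigma, \Lambda^0)$ with $d\phi = \theta$ on $\Sigma$. Set $\psi := -2\phi$. Combined with the preceding lemma, this gives, at each $p\in\Sigma$ and for every $(1,0)$-field $L = \tfrac{1}{2}(X - \i JX)$ with $X \in T_p\Sigma$ and $JX\in T_p\Sigma$, the identity
\[
\tfrac{1}{2}\overline{L}\psi + \Hessian_\delta(N, L) = 0.
\]
The second-order term $\tfrac{1}{2}\Hessian_\psi(L, L) + g(\nabla_L\nabla_N\nabla\delta, L)$ then vanishes exactly as in the proof of Theorem \ref{2dim}, and Theorem \ref{pre} concludes that the Diederich--Forn\ae ss index of $\Omega$ equals $1$.

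The hard part is the case $JX \notin T_p\Sigma$, i.e.\ where $\Sigma$ fails to be $J$-invariant at $p$. The relation $d\phi = \theta$ on $\Sigma$ says nothing about $JX\phi$, whereas the equation $\tfrac{1}{2}\overline{L}\psi = -\Hessian_\delta(N, L)$ fixes that transversal derivative. Following the discussion in the paragraph preceding the theorem, transversality of $JX$ to $\Sigma$ supplies total freedom to prescribe $JX\phi$ (and higher-order transversal Taylor coefficients) when extending $\phi$ off $\Sigma$ via Whitney's theorem, so one can force the match with $\Im\Hessian_\delta(N, L)$. The preceding lemma then ensures that this transversal Taylor data is compatible with the data already prescribed on $\Sigma$, completing the extension and hence the proof.
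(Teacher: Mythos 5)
Your proposal takes essentially the same approach as the paper. The paper's proof of Theorem \ref{smooth} is extremely terse (it merely says "By the discussion above, we obtain following theorems"), but its ingredients are exactly yours: the unlabeled lemma preceding the theorem identifies $\tfrac{1}{4}g(\nabla_\nu\nu,\cdot)$ with the real/imaginary parts of $\Hessian_\delta(N,\cdot)$ along real coordinate directions, and the paragraph immediately after that lemma handles the transversal (non-$J$-invariant) directions by pointing out that transversality of $J\partial/\partial x_j$ to $\Sigma$ gives free choice of the extension there. Your definition of the real $1$-form $\theta(X)=\tfrac14 g(\nabla_\nu\nu,X)$, the appeal to $\mathcal{H}^1(\Sigma)$ being trivial for the tangent part, the choice $\psi=-2\phi$, and the Whitney-type extension match what the paper is gesturing at; you simply spell out the steps that the paper leaves implicit.
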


In $\mathbb{C}^2$, if the set $\Sigma$ of Levi-flat points forms a real curve which is transversal to $T^{(1,0)}\partial\Omega$, then the Diederich--Forn\ae ss index is 1. This result was proved by Krantz--Liu--Peloso in \citep{KLP16}. We now show that even if the real curve is not transversal to $T^{(1,0)}\partial\Omega$, the index is also 1. Here, we call a simple curve (1-manifold) a \textit{real curve} if it can be parametrized by a smooth map $\gamma: t\mapsto\partial\Omega$, where $t\in[0,1]$.

\begin{theorem}\label{newtheorem}
	Let $\Omega$ be a bounded pseudoconvex domain with smooth boundary in $\mathbb{C}^2$. Let $\Sigma$ be the set of Levi-flat points. Suppose $\Sigma$ forms a real curve. Then the Diederich--Forn\ae ss index is 1.
\end{theorem}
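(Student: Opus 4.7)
The plan is to apply Theorem \ref{pre} by producing, for every $\eta\in(0,1)$, a smooth function $\psi$ on a neighborhood of $\Sigma$ in $\partial\Omega$ that makes the inequality of Theorem \ref{pre} hold on $\Sigma$; note that in $\mathbb{C}^{2}$ one has $\Sigma_{L}=\Sigma$ for any non-vanishing $(1,0)$ tangent field $L$, so there is really only one $L$ to check. Following the strategy of Theorems \ref{2dim} and \ref{3dim}, I would force $\overline{L}\psi=-2\Hessian_{\delta}(N,L)$ on $\Sigma$ so that the first term of the Theorem \ref{pre} inequality vanishes, set $\psi=-2\phi$, and use the identities of the preceding real-variable lemma of Section \ref{sec5} to conclude that the Hessian term vanishes as well.

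The observation that lets one dispense with the Krantz--Liu--Peloso transversality hypothesis is purely dimensional. Write $T^{c}\partial\Omega$ for the maximal $J$-invariant subspace of $T\partial\Omega$; in $\mathbb{C}^{2}$ this is a real two-dimensional distribution, $\partial\Omega$ is real three-dimensional, and $\Sigma$ is a real one-dimensional curve. Hence at every point $p\in\Sigma$ one can choose an orthonormal pair $\partial/\partial x,\partial/\partial y=J(\partial/\partial x)$ spanning $T^{c}_{p}\partial\Omega$ for which $\partial/\partial y$ is transversal to $\Sigma$: if $T_{p}\Sigma\not\subset T^{c}_{p}\partial\Omega$ any choice works, and if $T_{p}\Sigma\subset T^{c}_{p}\partial\Omega$ it suffices to align $\partial/\partial x$ with $T_{p}\Sigma$. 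The preceding lemma of Section \ref{sec5} then identifies $\Re\Hessian_{\delta}(N,\partial/\partial z)=\tfrac14 g(\nabla_{\nu}\nu,\partial/\partial x)$ and $\Im\Hessian_{\delta}(N,\partial/\partial z)=\tfrac14 g(\nabla_{\nu}\nu,\partial/\partial y)$, so the equation $\overline{L}\psi=-2\Hessian_{\delta}(N,L)$ becomes the real system $\psi_{x}=-g(\nabla_{\nu}\nu,\partial/\partial x)$, $\psi_{y}=-g(\nabla_{\nu}\nu,\partial/\partial y)$ on $\Sigma$.

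I would then prescribe $\psi$ along $\gamma$ by integrating the first equation from one endpoint of the arc --- it is a genuine constraint only at points where $\partial/\partial x$ is tangent to $\Sigma$ --- and extend $\psi$ into a full neighborhood of $\Sigma$ in $\partial\Omega$ by treating the second equation as an ODE along the integral curves of $\partial/\partial y$, exactly as in the discussion immediately preceding Theorem \ref{smooth}. Local extensions are glued by a partition of unity along $\gamma$; since $\gamma$ is a simple arc parametrized by $[0,1]$ its topology is that of a contractible interval and no monodromy obstruction arises. The second identity of the preceding lemma supplies the real-variable analogue of Lemma \ref{lem2} and forces $g(\nabla_{L}\nabla_{N}\nabla\delta,L)=\Hessian_{\phi}(L,L)$ once $\overline{L}\phi=\Hessian_{\delta}(N,L)$ on $\Sigma$, so the Hessian term $\tfrac12\bigl(\tfrac12\Hessian_{\psi}(L,L)+g(\nabla_{L}\nabla_{N}\nabla\delta,L)\bigr)$ also vanishes on $\Sigma$. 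A Whitney-type extension in the style of the end of the proof of Theorem \ref{2dim} finally promotes $\psi$ from $\Sigma$ to a genuine neighborhood in $\partial\Omega$.

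The main obstacle I expect is the smooth selection of the frame $(\partial/\partial x,\partial/\partial y)$ along $\gamma$ at points where the curve transitions between being tangential and transversal to $T^{c}\partial\Omega$. Both regimes are handled by the same construction, so only continuous dependence of the frame is actually required, but a careful patching via finite open cover and partition of unity is needed to keep the extended $\psi$ single-valued across such transitions.
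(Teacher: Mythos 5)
The proposal is correct in outline, but it takes a genuinely different route from the paper, and one of its claims is phrased too strongly. You attempt to carry over the ``exact vanishing'' strategy of Theorems~\ref{2dim} and~\ref{3dim}: prescribe $\psi$ on $\gamma$ by integrating $\psi_x=-g(\nabla_\nu\nu,\partial/\partial x)$, prescribe the transversal first derivative $\psi_y=-g(\nabla_\nu\nu,\partial/\partial y)$ via an ODE off $\gamma$, and then argue that both the gradient term and the Hessian term of Theorem~\ref{pre} vanish on $\Sigma$. The paper's proof is shorter and avoids the ODE entirely: it simply sets $\psi\equiv 0$ on $\Sigma$ (so the tangential first-order equation is deliberately \emph{not} solved and the real part $g(\nabla_\nu\nu,\partial/\partial t)$ of the gradient term survives), prescribes only $J\tfrac{\partial}{\partial t}\psi=-g(\nabla_\nu\nu,J\tfrac{\partial}{\partial t})$ to kill the imaginary part, and then exploits the freely assignable transversal second derivative by choosing $(J\tfrac{\partial}{\partial t})^2\psi<-2C_\eta$ to dominate everything else. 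Because $\psi$ is allowed to depend on $\eta$ in Theorem~\ref{pre}, this one extra ``slack'' variable gives the inequality for each fixed $\eta$ without any integration along $\gamma$, any compatibility check, or any Whitney-type extension; the gain of your approach is a single $\psi$ that works for every $\eta$, at the cost of the frame-selection and gluing issues you yourself flag at the end.

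One substantive caution on your argument: the sentence claiming the second identity of the real-variable lemma ``forces'' $g(\nabla_L\nabla_N\nabla\delta,L)=\Hessian_\phi(L,L)$ \emph{once} $\overline{L}\phi=\Hessian_\delta(N,L)$ holds on $\Sigma$ overstates the analogy with Lemma~\ref{lem2}. In the complex-submanifold case both real directions $\partial/\partial x_j,\partial/\partial y_j$ are tangent to $\Sigma$, so the identity for the Hessian comes for free from the first-order equation restricted to $\Sigma$. For a real curve only $\partial/\partial x$ is tangent; $\Hessian_\psi(L,L)=\tfrac14(\psi_{xx}+\psi_{yy})$ involves $\psi_{yy}$, which is \emph{not} determined by the first-order data on $\Sigma$ and must be actively prescribed. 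Your ODE step $\psi_y=-g(\nabla_\nu\nu,\partial/\partial y)$ in a full neighborhood does supply $\psi_{yy}\vert_\Sigma=-\tfrac{\partial}{\partial y}g(\nabla_\nu\nu,\partial/\partial y)$, which makes the computation close, so the conclusion is right; but the ``forced'' language obscures that this is an extra prescription (and in fact it is precisely this freedom in $\psi_{yy}$ that the paper's proof uses, in a cruder way, to get the sign without solving anything tangentially).
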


\begin{proof}
	We just need to show that the case that $\frac{\partial}{\partial t}$ is parallel to $T^{(1,0)}\partial\Omega$.  By the tubular neighborhood theorem, there is a tubular neighborhood of $\Sigma$ in $\partial\Omega$. We are going to find $\psi$ defined on this tubular neighborhood. By Theorem \ref{pre}, we notice that if we can show the following inequality
	\begin{equation}\label{new}\begin{split}
				&\left(\frac{1}{1-\eta}-1\right)\left|\frac{\partial}{\partial t}\psi+\i (J\frac{\partial}{\partial t})\psi+g(\nabla_\nu\nu, \frac{\partial}{\partial t})+\i g(\nabla_\nu\nu, J\frac{\partial}{\partial t})\right|^2\\&+\left(\frac{\partial^2}{\partial t^2}\psi+(J\frac{\partial}{\partial t})^2\psi+\frac{\partial}{\partial t}g(\nabla_\nu\nu, \frac{\partial}{\partial t})+J\frac{\partial}{\partial t}g(\nabla_\nu\nu, J\frac{\partial}{\partial t})\right)\leq 0
	\end{split}\end{equation}
along $\Sigma$ for arbitrary $\eta\in(0,1)$, then we are done. For the proof, we always assume $\psi\equiv0$ on $\Sigma$. This gives the following identities,
	\[\frac{\partial}{\partial t}\psi=0\qquad\text{and}\qquad \frac{\partial^2}{\partial t^2}\psi=0.\]
	Since $J\frac{\partial}{\partial t}$ is transversal to $\Sigma$, we also can assume $\psi$ satisfies \[J\frac{\partial}{\partial t}\psi=-g(\nabla_\nu\nu,J\frac{\partial}{\partial t})\] along $\Sigma$. Thus (\ref{new}) becomes \[
	\left(\frac{1}{1-\eta}-1\right)\left|g(\nabla_\nu\nu, \frac{\partial}{\partial t})\right|^2+\left((J\frac{\partial}{\partial t})^2\psi+\frac{\partial}{\partial t}g(\nabla_\nu\nu, \frac{\partial}{\partial t})+J\frac{\partial}{\partial t}g(\nabla_\nu\nu, J\frac{\partial}{\partial t})\right)\leq 0.\]
	Then for each $\eta\in(0,1)$, we find a number $C_\eta>0$ by smoothness so that \[\left(\frac{1}{1-\eta}-1\right)\left|g(\nabla_\nu\nu, \frac{\partial}{\partial t})\right|^2+\left(\frac{\partial}{\partial t}g(\nabla_\nu\nu, \frac{\partial}{\partial t})+J\frac{\partial}{\partial t}g(\nabla_\nu\nu, J\frac{\partial}{\partial t})\right)\leq C_\eta.\] Again, since $J\frac{\partial}{\partial t}$ is transversal to $\Sigma$, we can choose $\psi$ satisfying $(J\frac{\partial}{\partial t})^2\psi<-2C_\eta$. This gives that \[	\left(\frac{1}{1-\eta}-1\right)\left|g(\nabla_\nu\nu, \frac{\partial}{\partial t})\right|^2+\left((J\frac{\partial}{\partial t})^2\psi+\frac{\partial}{\partial t}g(\nabla_\nu\nu, \frac{\partial}{\partial t})+J\frac{\partial}{\partial t}g(\nabla_\nu\nu, J\frac{\partial}{\partial t})\right)\leq -C_\eta\leq 0.\] This completes the proof.
\end{proof}	

	\bigskip
	\bigskip
	\noindent {\bf Acknowledgments}. The author thanks to Dr. Steven Krantz for drawing his attention to this topic. The author thanks to Dr. Siqi Fu for his advice. The author also thank to Dr. Masanori Adachi, Dr. Xinghong Pan, Dr. Marco Peloso, Dr. Lihan Wang, Dr. Bun Wong, Dr. Yuan Yuan, Dr. Qi S. Zhang and Dr. Meng Zhu for fruitful conversations. Last but not least, the author thank to Dr. Jeffery McNeal for helping make the references more accurate.
	
	\printbibliography
	
\end{document}